\newtheorem{Theorem}{Theorem}[section]
\newtheorem{Proposition}[Theorem]{Proposition}
\newtheorem{Conjecture}[Theorem]{Conjecture}
 \theoremstyle{definition}
\newtheorem{Definition}[Theorem]{Definition}
\newtheorem{Remarks}[Theorem]{Remarks}
\newtheorem{Examples}[Theorem]{Examples}
\numberwithin{equation}{section}
\newcommand{\Pic}{\operatorname{Pic}}
\newcommand{\Bl}{\operatorname{Bl}}
\newcommand{\Sing}{\operatorname{Sing}}
\renewcommand{\O}{{\mathcal O}}
\newcommand{\I}{{\mathcal I}}
\newcommand{\p}{{\mathbb P}}
\newcommand{\codim}{\operatorname{codim}}
\newcommand{\map}{\dasharrow}
\def\leq{\leqslant}
\def\geq{\geqslant}
\def\bibaut#1{{\sc #1}}
\def\phi{\varphi}
\begin{document}

\begin{abstract}Small codimensional embedded manifolds defined by equations of small degree are Fano and covered by lines. They are complete intersections exactly when the variety of lines through a general point is so and has the right codimension. This allows us to prove the Hartshorne Conjecture for manifolds defined by quadratic equations and to obtain the list of such Hartshorne manifolds. 
%Using the geometry of the variety of lines through a general point, we characterize scrolls among dual defective manifolds. This leads to %an optimal bound for the dual defect, which improves results due to Ein. We discuss our conjecture that every dual defective manifold with %cyclic Picard group should also be secant defective, of a very special type, namely a local quadratic entry locus variety.
\end{abstract}
\subjclass[2000]{14MXX, 14NXX, 14J45, 14M07}
\keywords{Fano manifold, covered by lines, quadratic manifold, Hartshorne Conjecture}
\title[Manifolds covered by lines]{Manifolds covered by lines and the Hartshorne Conjecture for quadratic manifolds}
\author[Paltin Ionescu]{Paltin Ionescu*}
\address{\sc University of Bucharest, Faculty of Mathematics and Computer Science,\linebreak 14 Academiei St., 010014 Bucharest\newline \noindent and \newline\indent Institute of Mathematics of the Romanian Academy, P.O. Box 1-764, 014700 Bucharest\\ Romania}
\email{Paltin.Ionescu@imar.ro}
\author[Francesco Russo]{Francesco Russo}
\address{\sc Dipartimento di Matematica e Informatica\\
Universit\` a degli Studi di Catania\\
Viale A. Doria, 6\\
95125 Catania\\ Italy}
\email{frusso@dmi.unict.it}

\thanks{*Partially  supported  by the Italian Programme ``Incentivazione alla mobilit\`{a} di studiosi stranieri e italiani residenti all'estero"}
\maketitle

\section*{Introduction}

The present paper is a natural sequel to our previous work, see \cite{Ru, IR, IR2} and also \cite{BI}. The geometry of the variety of lines passing through the general point of an embedded projective manifold is investigated, in the framework of Mori Theory. Our main results are a proof of the Hartshorne Conjecture on complete intersections for manifolds defined by quadratic equations and a description of all extremal cases.

We consider $n$-dimensional irreducible non-degenerate complex projective manifolds $X\subset \p^{n+c}$. We call $X$ a {\it prime Fano manifold of index $i(X)$} if its Picard group is generated by the hyperplane section class $H$ and $-K_X=i(X)H$ for some positive integer $i(X)$. One consequence of Mori's work \cite{Mori} is that, for $i(X) > \frac{n+1}{2}$, $X$ is {\it covered by lines}, i.e.\ through each point of $X$ there passes a line, contained in $X$. As noticed classically, Fano {\it complete intersections} with $i(X)\geq 2$ are also covered by lines. The ``biregular part'' of Mori Theory
(no singularities, no flips, ...), see \cite{Mori2, De, Ko}, provides the natural setting for studying manifolds covered by lines. For instance, as first noticed in \cite{BSW}, when the dimension of the variety of lines passing through a general point is at least $\frac{n-1}{2}$, there is a Mori contraction of the covering family of lines. Moreover, its general fiber (which is still covered by lines) has cyclic Picard group, thus being a prime Fano manifold. For prime Fanos, the study of covering families of lines is nothing but the classical aspect in the theory of the {\it variety of minimal rational tangents}, developed by Hwang and Mok in a remarkable series of papers, see e.g. \cite{HM, HM2, HM3, Hwang}. We also recall that lines contained in $X$ play a key role in the proof of an important  result due to Barth--Van de Ven and Hartshorne, cf. \cite{Barth}. It states that $X$ must be a complete intersection when its dimension is greater than a suitable (quadratic) function of its degree.

Prime Fanos of high index other than complete intersections are quite rare. 
%For instance, {\it dual defective} manifolds and some special but important {\it secant defective} ones provide such examples. Thus, when $%{\rm Pic}(X)=\mathbb Z\langle H\rangle$, the class of {\it local quadratic entry locus} varieties, see \cite{Ru, IR, IR2} gives examples with $i%(X)=\frac{n+\delta}{2}$, $\delta$ being the secant defect, while dual defective manifolds have $i(X)=\frac{n+k+2}{2}$, where $k$ is the dual %defect, see \cite{Ein}.
Moreover, all known examples of prime Fanos of high index are either complete intersections or {\it quadratic}, i.e. scheme theoretically defined by quadratic equations. Mumford, \cite{Mum}, was the first to call the attention to the fact that many special but highly interesting embedded manifolds are quadratic. One crucial remark we made is that prime Fanos of high index are embedded with {\it small codimension}, which naturally leads us to the famous {\it Hartshorne Conjecture}, see~\cite{Ha}:
if $n\geq 2c+1$, $X$ should be a complete intersection.
This conjecture is already very difficult even in the special case of (prime) Fano manifolds. Early contributions related to the conjecture came by with a topological flavor. First the celebrated Fulton--Hansen Connectivity Theorem \cite{FH}, followed by Zak's Linear Normality Theorem and his equally famous Theorem on Tangencies, see \cite{Zak}. Then, the beautiful result of Faltings \cite{Faltings}, later improved by Netsvetaev \cite{Net}, showing that $X$ is a complete intersection when the {\it number} of equations scheme theoretically defining it is small. Another development, suggested by classical work of Severi, is due to Bertram--Ein--Lazarsfeld \cite{BEL}. They characterize complete intersections in terms of the {\it degrees} of the first $c$ equations defining $X$ (in decreasing order).

Consider now $x\in X$ a general point and let $\mathcal{L}_x$ denote the variety of lines through $x$, contained in $X$. Note that $\mathcal{L}_x$ is naturally embedded in $\p^{n-1}=$ the space of tangent directions at $x$. Let $a:=\dim(\mathcal{L}_x)$ and note that for prime Fanos, $i(X)=a+2$. When $a\geq \frac{n-1}{2}$, $\mathcal{L}_x\subset \p^{n-1}$ is smooth, irreducible and, due to a key result by Hwang \cite{Hwang}, non-degenerate. Our Theorem~\ref{schemeLxrefined}, inspired by \cite[Corollary~4]{BEL}, shows that we can relate the equations of $\mathcal{L}_x\subset \p^{n-1}$ to those of $X\subset \p^N$. In particular, manifolds of small codimension and defined by equations of small degree are covered by lines; moreover, $X\subset \p^N$ is a complete intersection if and only if $\mathcal{L}_x\subset \p^{n-1}$ is so and its codimension is the right one. In case $X$ is quadratic, we combine the above with Faltings' Criterion \cite{Faltings} to deduce that, when $n\geq 2c+1$, $\mathcal{L}_x\subset \p^{n-1}$ is a complete intersection.  Next, we appeal to a different ingredient, leading to a proof of the Hartshorne Conjecture for quadratic manifolds, see Theorem~\ref{HaCo}. The necessary new piece of information is provided by the projective second fundamental form which, due to the Fulton--Hansen Connectivity Theorem, turns out to be of maximal dimension. We would like to point out that, working with local differential geometric methods, in the spirit of \cite{GH, IL}, Landsberg in \cite{Landsberg} proved that when $n\geq 3c+b-1$ a (possibly singular) quadratic variety $X$ is a complete intersection, where $b=\dim({\rm Sing}(X))$.

Using Netsvetaev's Theorem \cite{Net}, we show that the only quadratic manifolds $X\subset \p^\frac{3n}{2}$ which are not complete intersections are $\mathbb G(1,4)\subset\p^9$ and the spinorial manifold $S^{10}\subset\p^{15}$, see our Theorem~\ref{HV}.
\newpage

\section{Preliminaries}
\begin{enumerate}
\item[$(*)$] {\it Setting, terminology and notation}
\end{enumerate}

Throughout the paper we consider $X\subset \p^N$ an irreducible complex projective manifold
of dimension $n\geq 1$. $X$ is assumed to be non-degenerate and $c$ denotes its codimension, so that $N=n+c$. We also suppose that $X$ is scheme theoretically an intersection of $m$ hypersurfaces of degrees $d_1\geq d_2\geq\cdots\geq d_m$. It is implicitly assumed that $m$ is minimal, i.e. none of the hypersurfaces contains the intersection of the others. We put
$d:=\sum_{i=1}^{c}(d_i-1)$. $X\subset\p^N$ is called {\it quadratic} if it is scheme theoretically an intersection of quadrics (i.e. $d_1=2$). Note that this happens precisely when $d=c$.
%We also suppose that \rosso{$X=V(f_1,\ldots, f_m)$ is scheme theoretically an intersection of $m$ hypersurfaces $V(f_1),\ldots, V(f_m)$ of degrees $d_1\geq d_2\geq\cdots\geq d_m$. We put
%$$d:=\min\{\sum_{i=1}^{c}(d_i-1)\text{ for expressions $X=V(f_1,\ldots, f_m)$ as above}\}.$$ }
%$X\subset\p^N$ is called {\it quadratic} if it is scheme theoretically an intersection of quadrics (i.e. $d_1=2$). Note that this happens precisely when $d=c$.

For $x\in X$ we let $\mathbf{T}_xX$ denote the (affine) Zariski tangent space to $X$ at $x$, and write $T_xX$ for its projective closure in $\p^N$.
$H$ denotes a hyperplane section (class) of $X$. As usual, $K_X$ stands for the canonical class of $X$. Also, if $Y\subset X$ is a submanifold, we denote by $N_{Y/X}$ its normal bundle. For a vector bundle $E$, $\p(E)$ stands for its projectivized bundle, using Grothendieck's convention.

We let $SX\subset \p^N$ be the {\it secant variety} of $X$, that is the closure of the locus of secant lines. The {\it secant defect} of $X$ is the (nonnegative) number $\delta=\delta(X):=2n+1-\dim(SX)$. We say $X$ is {\it secant defective} when $\delta > 0$.

%A secant defective manifold $X\subset \p^N$ is called a {\it local quadratic entry locus} manifold, LQELM for short, if any two general points $x, x' \in X$ belong to a $\delta$-dimensional quadric  $Q_{x,x'}\subset X$. See \cite{Ru, IR2} for a systematic study of these special secant defective manifolds.

$X\subset \p^N$ is {\it conic-connected} if any two general points $x, x'\in X$ are contained in some conic $C_{x,x'}\subset X$. Conic-connected manifolds are secant defective. Classification results for conic-connected manifolds are obtained in \cite{IR}, working in the general setting of rationally connected manifolds.

For a general point $x\in X$, we denote by $\mathcal L_x$ the (possibly empty) scheme of lines contained in $X$ and passing through $x$. We say that $X\subset \p^N$ is {\it covered by lines} if $\mathcal L_x$ is not empty for $x\in X$ a general point. We refer the reader to \cite{De, Ko} for standard useful facts about the deformation theory of rational curves; we shall use them implicitly in the simplest case, that is lines on $X$.

Recall that $X$ is Fano if $-K_X$ is ample. The {\it index} of $X$, denoted by $i(X)$, is the largest integer $j$ such that $-K_X = jA$ for some ample divisor $A$.

\begin{enumerate}
\item[$(**)$] {\it The projective second fundamental form}
\end{enumerate}

We recall some general results which 
are probably well known to the experts but for which we are unable to provide a proper reference.

 There are several possible equivalent
definitions of the projective second fundamental form
$|II_{x,X}|\subseteq\p(S^2(\mathbf{T}_xX))$ of an irreducible
projective variety $X\subset\p^N$ at a general point $x\in X$, see
for example \cite[3.2 and end of Section~3.5]{IL}. We use the
one related to tangential projections, as in \cite[Remark
3.2.11]{IL}.

 Suppose $X\subset\p^N$ is non-degenerate, as always,
 let $x\in X$ be a general point and
consider the projection from $T_xX$ onto a disjoint $\p^{c-1}$
\begin{equation}\label{tangentdef}
\pi_x:X\map W_x\subseteq\p^{c-1}.
\end{equation}
The map $\pi_x$ is associated to the linear system of hyperplane
sections cut out by hyperplanes containing $T_xX$, or equivalently
by the hyperplane sections singular~at~$x$.

Let $\phi:\Bl_xX\to X$ be the blow-up of $X$ at $x$, let
\[E=\p((\mathbf{T}_xX)^*)=\p^{n-1}\subset\Bl_xX\] be the exceptional
divisor and let $H$ be a hyperplane section of $X\subset\p^N$. The
induced rational map $\widetilde{\pi}_x:\Bl_xX\map\p^{c-1}$ is
defined as a rational map along $E$ since $X\subset\p^N$ is not a linear space.
%; see
%for example the argument in \cite[2.1~(a)]{Ein}. 
The restriction of
$\widetilde{\pi}_x$ to $E$ is given by a linear system in
$|\phi^*(H)-2E|_{|E}\subseteq|-2E_{|E}|=|\O_{\p((\mathbf{T}_xX)^*)}(2)|=\p(S^2(\mathbf{T}_xX))$.

\begin{Definition}
 The {\it second fundamental form}
$|II_{x,X}|\subseteq\p(S^2(\mathbf{T}_xX))$ of an irreducible
non-degenerate variety $X\subset\p^N$ of dimension $n\geq 2$ at a
general point $x\in X$ is the non-empty linear system of quadric
hypersurfaces in $\p((\mathbf{T}_xX)^*)$ defining the restriction of
$\widetilde{\pi}_x$ to $E$.
\end{Definition}
 
Clearly  $\dim(|II_{x,X}|)\leq c-1$ and
$\widetilde{\pi}_x(E)\subseteq W_x\subseteq\p^{c-1}$. From this
point of view the base locus on $E$ of the second fundamental form
$|II_{x,X}|$ consists of {\it asymptotic directions}, i.e.\ of
directions associated to lines having a contact of order at least three with
$X$ at $x$. For example, as we shall see, when $X\subset\p^N$ is quadratic,  the base locus of the second fundamental form
consists of points representing tangent lines contained in $X$ and passing
through $x$, so that it is exactly (even scheme theoretically) $\mathcal L_x$.

\begin{Proposition}\label{secondform} Let $X\subset\p^N$ be
a smooth irreducible non-degenerate variety of secant defect
$\delta\geq 1$.
Then $\dim(|II_{x,X}|)=c-1$ for $x\in X$ a general point.
\end{Proposition}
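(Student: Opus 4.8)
Since the inequality $\dim(|II_{x,X}|)\leq c-1$ always holds, we must establish the opposite one. By the description recalled just above, $|II_{x,X}|$ is the linear system defining $\widetilde{\pi}_x|_E\colon E\map\p^{c-1}$ and takes values in $|\O_{\p((\mathbf{T}_xX)^*)}(2)|=\p(S^2(\mathbf{T}_xX))$; hence $\dim(|II_{x,X}|)=c-1$ if and only if the image $\overline{\widetilde{\pi}_x(E)}$ is non-degenerate in $\p^{c-1}$. Now $W_x$ itself is non-degenerate in $\p^{c-1}$: otherwise $W_x\subseteq\p^{c-2}$ would force $X\subseteq\overline{\pi_x^{-1}(\p^{c-2})}=\langle T_xX,\p^{c-2}\rangle=\p^{N-1}$, contradicting the non-degeneracy of $X$. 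Since $\overline{\widetilde{\pi}_x(E)}\subseteq W_x$, it therefore suffices to prove that $\widetilde{\pi}_x|_E$ is \emph{dominant} onto $W_x$.

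To this end I would analyse a general fibre of the tangential projection $\pi_x$. As $\delta\geq 1$, one has $\dim(W_x)=\dim(SX)-n-1=n-\delta<n$ (a standard property of tangential projections, see e.g. \cite{IL, Zak}), so the general fibre of $\pi_x$ has dimension $\delta\geq 1$. The crucial observation is that the linear projection $\p^N\map\p^{c-1}$ from $T_xX$ collapses the whole $(n+1)$-dimensional linear space $\langle y,T_xX\rangle$ to the single point $\pi_x(y)$; hence, for a general $y\in X$, the fibre of $\pi_x$ through $y$ is $F:=X\cap\langle y,T_xX\rangle$, and it contains $x$, simply because $x\in T_xX\subseteq\langle y,T_xX\rangle$.

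Granting that $x$ is not an isolated point of $F$, pick an irreducible curve $C\subseteq F$ through $x$. Since $C\subseteq\langle y,T_xX\rangle$, every tangent direction to $C$ at $x$, viewed as a point of $E=\p((\mathbf{T}_xX)^*)$, is sent by $\widetilde{\pi}_x$ to $\pi_x(y)$: this is immediate where $\widetilde{\pi}_x|_E$ is a morphism, and in general $\pi_x(y)\in\overline{\widetilde{\pi}_x(E)}$ because $\pi_x(y)$ is the limit along $C$ of points all of whose $\pi_x$-images equal $\pi_x(y)$, so one passes to a resolution of $\widetilde{\pi}_x|_E$. Letting $y$ range over a dense open subset of $X$, the points $\pi_x(y)$ fill up a dense subset of $W_x$; hence $\overline{\widetilde{\pi}_x(E)}=W_x$, which is non-degenerate in $\p^{c-1}$, and we conclude $\dim(|II_{x,X}|)=c-1$.

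It remains to justify that $x$ is non-isolated in $F=X\cap\langle y,T_xX\rangle$, and this is precisely where $\delta\geq 1$ and the Fulton--Hansen Connectivity Theorem come in; it is also the main obstacle. The space $\langle y,T_xX\rangle$ has codimension $c-1$ in $\p^N$, so $F$ is a linear section of the irreducible manifold $X$, positive-dimensional thanks to $\delta\geq 1$; applying the Fulton--Hansen theorem to this section (or, in the cases where its dimension is too small for the plain linear-section connectedness, to the family of secant lines through $x$, i.e. to the general entry locus) shows that the fibre of $\pi_x$ through $y$ is connected and that $x$ lies on it, so a connected variety containing the two distinct points $x$ and $y$ cannot have $x$ isolated. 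The delicate point is exactly that a dimension count by itself only produces a positive-dimensional fibre of $\pi_x$ somewhere on $X$, whereas what is needed is that this fibre be attached to the centre $x$ of the blow-up, so that it is detected by $\widetilde{\pi}_x|_E$; securing this is the role of the connectivity theorem.
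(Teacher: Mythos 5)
Your reduction of the statement to the dominance of $\widetilde{\pi}_x|_E$ onto $W_x$ (together with the correct remark that $W_x\subset\p^{c-1}$ is non-degenerate) agrees with the paper's strategy, but the route you take to dominance has a genuine gap, concentrated exactly at the point you flag. The paper never argues through fibres of $\pi_x$: it invokes the classical formula $\dim(TX)=n+1+\dim(\widetilde{\pi}_x(E))$ (Terracini, Griffiths--Harris, Ivey--Landsberg) together with the Fulton--Hansen consequence that $TX=SX$ for a smooth secant-defective variety, so that $\dim(\widetilde{\pi}_x(E))=(2n+1-\delta)-(n+1)=n-\delta=\dim(W_x)$ by the Terracini Lemma, whence $\overline{\widetilde{\pi}_x(E)}=W_x$. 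Your key claim --- that $x$ lies in the closure of the general fibre of $\pi_x$, so that the fibre is ``seen'' by $\widetilde{\pi}_x|_E$ --- is essentially a reformulation of the equality $\dim TX=\dim SX$, and it is precisely the content your sketch does not establish.

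Concretely: (i) Fulton--Hansen connectedness of the linear section $F=X\cap\langle y,T_xX\rangle$ needs $\dim X+\dim\langle y,T_xX\rangle>N$, i.e. $n\geq c$, which is not implied by $\delta\geq 1$ (the Veronese surface $v_2(\p^2)\subset\p^5$ has $\delta=1$, $n=2<3=c$), and the parenthetical alternative via entry loci is not an argument. (ii) Even when $F$ is connected, connectedness of a set containing $x$ and $y$ does not give $x\in\overline{F\setminus T_xX}$: the component of $F$ through $x$ may be contained in the contact locus $X\cap T_xX$, on which $\pi_x$ is undefined, and then no curve through $x$ with general point in the open fibre need exist. (iii) The last step also fails as written when the tangent direction $p$ of your curve $C$ at $x$ is an asymptotic direction, i.e. a base point of $|II_{x,X}|$: ``passing to a resolution of $\widetilde{\pi}_x|_E$'' does not help, because $C$ is not contained in $E$; on a resolution of $\widetilde{\pi}_x$ the strict transform of $C$ meets exceptional components lying over $p$, not the strict transform of $E$, and the images of those components need not lie in $\overline{\widetilde{\pi}_x(E)}$. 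So the constancy of $\pi_x$ along $C$ only yields $\pi_x(y)\in W_x$, which you already knew, not $\pi_x(y)\in\overline{\widetilde{\pi}_x(E)}$. The clean repair is the paper's argument: quote the formula for $\dim(TX)$ in terms of $\dim(\widetilde{\pi}_x(E))$ and apply Fulton--Hansen in the form $TX=SX$.
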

\begin{proof}
Let notation be as above. It is sufficient to show
that $\dim(\widetilde{\pi}_x(E))=n-\delta$ because
$\widetilde{\pi}_x(E)\subseteq W_x$ and 
$W_x\subseteq\p^{c-1}$ is a non-degenerate variety, whose  dimension
is  $n-\delta$ by the Terracini Lemma.

Let $TX=\bigcup_{x\in X}T_xX$ be the tangential variety of $X$. The
following formula holds
\begin{equation}\label{dimension}
\dim(TX)=n+1+\dim(\widetilde{\pi}_x(E)),
\end{equation}
 see \cite{Terracini} (or
\cite[5.6, 5.7]{GH}  and \cite[Proposition 3.13.3]{IL} for a modern reference).

The variety $X\subset\p^N$ is smooth and secant defective, so that
$TX=SX$ by a theorem of Fulton and Hansen, \cite{FH}. Therefore
$\dim(TX)=2n+1-\delta$ and from \eqref{dimension} we get
$\dim(\widetilde{\pi}_x(E))=n-\delta$, as claimed.
\end{proof}

\section{Manifolds covered by lines}

Let $X\subset \p^N$ be as in ($*$). The following examples of manifolds covered by lines are of relevance to us.

\begin{Examples}\label{ex}

\begin{enumerate}
\item $X\subset \p^N$ a Fano complete intersection with  $i(X)\geq 2$. 

\item (Mori) $X\subset \mathbb{P}^N$ a Fano manifold with ${\rm Pic}(X)=\mathbb{Z}\langle H\rangle$ and  $i(X)> \frac {n+1}2$.

\item $X\subset \p^N$ a conic-connected manifold, different from the Veronese variety $v_2(\p^n)$ or one of its isomorphic projections.
%\ro[Cosa vuoi dire esattamente con questa
%frase? Mi sfugge un po' il senso]

\end{enumerate}
\end{Examples}

Fix some irreducible component, say $\mathcal F$, of the Hilbert scheme of lines on $X\subset \p^N$, such that $X$ is covered by the lines in $\mathcal F$. Put $a=: \deg (N_{\ell/X})$ where $[\ell] \in \mathcal F$. Note that
$a\geq 0$ and $a= \dim (\mathcal{F} _x)$, where $x\in X$ is a general point and     $\mathcal{F}_x=\{[\ell] \in\mathcal F\mid x\in \ell\}$. Moreover, we may view $\mathcal{F}_x$ as a closed subscheme of $\p((\mathbf T_xX)^*)\cong \p^{n-1}$.

When the dimension of $\mathcal{F}_x$ is large, the study of manifolds covered by lines is greatly simplified by the following two facts:

First, we may reduce, via a Mori contraction, to the case where the Picard group is cyclic; this is due to Beltrametti--Sommese--Wi\'sniewski, see \cite{BSW}.
Secondly, the variety $\mathcal{F}_x\subset \p^{n-1}$
inherits many of the good properties of $X\subset \p^N$; this is due to Hwang, see \cite{Hwang}. See \cite{BI} for an application of these principles.
\begin{Theorem}\label{reduction}
Assume $a\geq \frac {n-1}2$. Then the following results hold:
\begin{enumerate}
\item {\rm (\cite{BSW})} There is a Mori contraction, say ${\rm cont}_{\mathcal F} : X\to W$, of the lines from $\mathcal F$; let $F$ denote a general fiber of ${\rm cont}_{\mathcal F}$ and let $f$ be its dimension;

\item {\rm (\cite{Wisniewski})} ${\rm Pic}(F) = \mathbb{Z} \langle H_F\rangle$, $i(F)= a+2$ and $F$ is covered by the lines from $\mathcal F$ contained in $F$;

\item {\rm (\cite{Hwang})} $\mathcal {F}_x\subseteq \mathbb{P}^{f-1}$ is smooth irreducible non-degenerate. In particular, $F$ has only one maximal irreducible covering family of lines.
\end{enumerate}
\end{Theorem}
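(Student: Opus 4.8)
The plan is to recognise each of the three items as, up to a small amount of glue, one of the cited theorems, and to organise the proof as: (i) produce the extremal ray spanned by $[\ell]$ together with its contraction; (ii) read off the geometry of a general fibre; (iii) feed that fibre into Hwang's theorem. First I would record the numerical data of a line $\ell$ with $[\ell]\in\F$: since $\ell\cong\p^1$ is a line of $\p^N$ contained in the smooth variety $X$, one has $H\cdot\ell=1$ and $-K_X\cdot\ell=\deg N_{\ell/X}+2=a+2$. For any irreducible curve $C$ with $[C]$ proportional to $[\ell]$ one has $H\cdot C\in\Z_{>0}$, with $H\cdot C=1$ forcing $C$ to be a line and $[C]=[\ell]$; hence the ray $R:=\R_{\ge 0}[\ell]$ has length $a+2$, attained only by lines.

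For (1) I would invoke \cite{BSW}: when $a\ge\frac{n-1}2$ the covering family $\F$ is ``long enough'' that $R$ is an extremal ray of $\overline{NE}(X)$. Granting this, the Cone and Contraction Theorems produce a morphism with connected fibres ${\rm cont}_{\F}\colon X\to W$ onto a normal projective variety, contracting exactly the curves numerically proportional to $\ell$; in particular every line of $\F$ is contracted, hence lies in a fibre. For (2), let $F$ be a general fibre and $f=\dim F\le n$. Since $R$ is $K_X$-negative, $-K_X$ is ${\rm cont}_{\F}$-ample, so $-K_X|_F$ is ample; by Wi\'sniewski's analysis of contractions of extremal rays \cite{Wisniewski}, $F$ is smooth and ${\rm Pic}(F)=\Z\langle H_F\rangle$ with $H_F$ the ample generator, so $F$ is Fano. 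Being a general fibre, $N_{F/X}$ is trivial, whence $K_F=K_X|_F+\det N_{F/X}=K_X|_F$; for a line $\ell\subset F$ of $\F$ this gives $-K_F\cdot\ell=a+2$ and $H_F\cdot\ell=1$ — the latter because $(H|_F)\cdot\ell=1$ and, writing $H|_F=kH_F$, this forces $k=1$, so $H|_F=H_F$ is primitive. Hence $-K_F=(a+2)H_F$ and $i(F)=a+2$. Finally, the lines of $\F$ through a general point of $F$ stay in $F$ and cover $X$, hence cover $F$.

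For (3) I would apply Hwang's theorem \cite{Hwang} to the prime Fano manifold $F$: since $f\le n$ one has $a\ge\frac{n-1}2\ge\frac{f-1}2$, so the hypothesis holds and $\F_x\subseteq\p^{f-1}$ is smooth, irreducible and non-degenerate. Uniqueness of the maximal irreducible covering family of lines then follows from the irreducibility of $\F_x$, as in \cite{Hwang}: on the prime Fano $F$ the lines are exactly the minimal rational curves (since $\rho(F)=1$, a rational curve $C$ with $H_F\cdot C\ge 2$ has $-K_F\cdot C\ge 2(a+2)>a+2$), any covering family of lines has fibre of dimension $i(F)-2=a$ over a general point, and, being a subvariety of the corresponding irreducible space of minimal rational curves through that point, must coincide with $\F_x$; an irreducible covering family being determined by its general members, all such families coincide with $\F$.

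The honest appraisal is that all the depth of the statement lies inside the three cited results, and in a self-contained treatment the genuine obstacle would be Hwang's theorem, whose proof rests on the theory of the variety of minimal rational tangents (propagation of the projective second fundamental form, Cartan--Fubini type extension). On the present side the only points needing care are numerical: that $R$ has length exactly $a+2$ and is extremal under the stated bound, that $H_F$ is the primitive generator of ${\rm Pic}(F)$ so that the index equals $a+2$ on the nose rather than a proper multiple, and that passing to the possibly lower-dimensional fibre $F$ does not break Hwang's hypothesis — which is precisely why the clean inequality $a\ge\frac{n-1}2$, and not the weaker $a\ge\frac{f-1}2$, is the natural thing to assume.
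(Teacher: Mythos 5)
Your proposal is correct and follows exactly the route the paper intends: the theorem is stated there as a compilation of the cited results (extremality and contraction from \cite{BSW}, the structure of the general fiber from \cite{Wisniewski}, and the properties of $\mathcal F_x$ from \cite{Hwang}), with no further proof given. The glue you supply --- the ray of length $a+2$ spanned only by lines, triviality of $N_{F/X}$ giving $K_F=K_X|_F$, primitivity of $H|_F$ forcing $i(F)=a+2$, and the observation $a\geq\frac{n-1}{2}\geq\frac{f-1}{2}$ so that Hwang's hypothesis survives the passage to $F$ --- is precisely what the paper leaves implicit, and it is sound.
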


The following very useful result was proved in \cite[Corollary 4]{BEL}. It will play a crucial role in what follows.

\begin{Theorem}[\cite{BEL}]\label{criterion} Let $X\subset\p^N$ be as in {\rm($*$)}. If 
$$-K_X=\O_X(n+1-d),$$
then $X\subset\p^N$ is a complete intersection of type $(d_1,\ldots,d_c)$.
\end{Theorem}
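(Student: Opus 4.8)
The plan is to realize $X$ inside a general complete intersection of the ``right'' type and then use liaison, the crucial point being a numerical coincidence hidden in the hypothesis. Since $d_1\geq\cdots\geq d_m$ and $\codim(X)=c\leq m$, each linear system $|\I_X(d_i)|$ with $1\leq i\leq c$ is non-empty; choosing general forms $A_i\in\H(\I_X(d_i))$, a Bertini-type argument should give that $A_1,\dots,A_c$ is a regular sequence, so that $Y:=\{A_1=\cdots=A_c=0\}$ is a complete intersection of type $(d_1,\dots,d_c)$, of pure dimension $n$, containing $X$ as a closed subscheme. I would then invoke standard liaison theory (Peskine--Szpiro): $Y$ is arithmetically Gorenstein with $\omega_Y\cong\O_Y\!\big(\sum_{i=1}^c d_i-N-1\big)$; the residual scheme $Z$ is defined by $\I_{Z/Y}\cong\mathcal{H}om_{\O_Y}(\O_X,\omega_Y)\otimes_{\O_Y}\omega_Y^{-1}$; and, since $X$ is Cohen--Macaulay of dimension $\dim Y$, one has $\mathcal{H}om_{\O_Y}(\O_X,\omega_Y)\cong\omega_X$ (all higher $\mathcal{E}xt$'s vanishing).

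Now comes the coincidence. From $\sum_{i=1}^c d_i=d+c$ and $N=n+c$ we get $\omega_Y\cong\O_Y(d-n-1)$, while the hypothesis $-K_X=\O_X(n+1-d)$ says exactly $\omega_X\cong\O_X(d-n-1)$. Plugging these into the liaison formulas gives
$$\I_{Z/Y}\ \cong\ \omega_X\otimes_{\O_Y}\omega_Y^{-1}\ \cong\ \O_X(d-n-1)\otimes_{\O_X}\O_X(n+1-d)\ \cong\ \O_X.$$
Since $X$ is irreducible, $\O_X$ is globally generated with $\ho(\O_X)=1$, hence so is the ideal sheaf $\I_{Z/Y}$; but $\H(\I_{Z/Y})\subseteq\H(\O_Y)=\C$ ($Y$ being connected), so a generator of $\I_{Z/Y}$ is a scalar, and it is non-zero (were it $0$ we would have $\I_{Z/Y}=0$, which already forces $X=Y$). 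Therefore $\I_{Z/Y}=\O_Y$, i.e.\ $Z=\varnothing$, so $X=Y$ is a complete intersection of type $(d_1,\dots,d_c)$.

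The only genuinely delicate point is the very first step. Since the spaces $\H(\I_X(d_i))$ may well be one-dimensional, ``choosing a general member'' leaves no freedom, and one must argue --- as is done in \cite{BEL} --- that the hypersurfaces scheme-theoretically cutting out $X$ can indeed be organized into a regular sequence of the prescribed degrees $d_1,\dots,d_c$, so that the complete intersection $Y$ with $\dim Y=n$ exists and the liaison machinery applies. Granting this, the remaining steps are formal: the whole content is the adjunction identity $\omega_Y|_X\cong\omega_X$ forced by the hypothesis.
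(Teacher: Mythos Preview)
Your argument is correct, but it takes a genuinely different route from the paper. The paper's proof is a two-line reduction to \cite[Corollary~4]{BEL}: one checks via Serre duality that
\[
h^{n+1}(\I_X(d-n-1))=h^n(\O_X(d-n-1))=h^0(\omega_X(n+1-d))=h^0(\O_X)=1,
\]
so $X$ fails to be $d$-regular, and the cited result of Bertram--Ein--Lazarsfeld then forces $X$ to be a complete intersection of type $(d_1,\dots,d_c)$.

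You instead unpack the situation directly via liaison: embedding $X$ in a complete intersection $Y$ of type $(d_1,\dots,d_c)$, you observe that the hypothesis says exactly $\omega_X\cong\omega_Y|_X$, whence the residual ideal $\I_{Z/Y}\cong\omega_X\otimes\omega_Y^{-1}\cong\O_X$ has a nonzero global section; since $H^0(\O_Y)=\C$ for a positive-dimensional complete intersection (Koszul), this section is the unit and $Z=\varnothing$. This is clean and self-contained, and in spirit it is what lies behind the regularity criterion in \cite{BEL}; the paper simply quotes that criterion as a black box. Both approaches rest on the same delicate point you flag: the existence of $c$ forms of the prescribed degrees $d_1,\dots,d_c$ in $I_X$ forming a regular sequence. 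The paper uses precisely this construction (and attributes it to \cite{BEL}) in the proof of the next theorem, writing $Y=Q_1\cap\cdots\cap Q_c=X\cup X'$, so your reliance on it is entirely in keeping with the paper's framework.
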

\begin{proof} Under our hypothesis 
\begin{align*}h^{n+1}(\I_X(d-n-1))&=h^n(\O_X(d-n-1))\\
&=h^0(\omega_X\otimes\O_X(n+1-d))=h^0(\O_X)\neq 0.\end{align*}
Thus $X\subset\p^N$ fails to be $d$-regular and we can apply \cite[Corollary 4]{BEL}.
\end{proof}

Now we can prove the following theorem, showing that $\mathcal{L}_x\subset \p^{n-1}$ ``behaves better'' than
$X\subset \p^N$. Indeed we can control the number of equations defining $\mathcal{L}_x$, as in the classical Example~\ref{ex}~(1), but without assuming $X$ to be a complete intersection.

\begin{Theorem}\label{schemeLxrefined} Let $X\subset\p^N$ be as in {\rm($*$)}. For $x\in X$ a general point, put $a=\dim(\mathcal L_x)$. Then the following results hold: 

\begin{enumerate}

\item If $\mathcal L_x\subset\p^{n-1}$ is nonempty, it is set theoretically defined by (at most) $d$ equations; in particular, we have $a\geq n-1-d$.

\item If $X$ is quadratic and $\mathcal L_x\subset\p^{n-1}$ is nonempty, $\mathcal L_x$ is scheme theoretically defined by (at most) $c$ quadratic equations.

\item If $d\leq n-1$, then $\mathcal L_x\neq\emptyset$; assume moreover that $n\geq c+2$ if $X$ is quadratic. Then

\begin{enumerate}

\item $X\subset\p^N$ is a Fano manifold with $\Pic(X)\simeq\mathbb Z\langle H\rangle$ and $i(X)=a+2$;

\item the following conditions are equivalent:

\begin{enumerate}

\item[{\rm (i)}]  $X\subset\p^N$ is a complete intersection;
\item[{\rm (ii)}] $\mathcal L_x\subset\p^{n-1}$ is a complete intersection of codimension $d$;
\item[{\rm (iii)}] $a=n-1-d$.

\end{enumerate}
\end{enumerate}

\item Assume that $\Pic(X)\simeq\mathbb Z\langle H\rangle$, $a\geq \frac{n-1}{2}$ and $\mathcal L_x\subset\p^{n-1}$ is a complete intersection. Then $X$ is conic-connected, $a\leq n-c-1$ and $n\geq 2c+1$.

%\item
%Assume that $d\leq \frac{n-1}{2}$. Then $\mathcal L_x\subset\p^{n-1}$ is a smooth irreducible non-degenerate complete intersection and
%$$\frac{n-1}{2}\leq n-1-d\leq a\leq n-c-1.$$

\end{enumerate} 
\end{Theorem}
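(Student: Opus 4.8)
The plan is to exploit the transfer of equations between $X$ and $\mathcal L_x$ established in parts (1)--(3), together with the projective second fundamental form. First I would observe that the hypotheses $\Pic(X)\simeq\Z\langle H\rangle$ and $a\geq\frac{n-1}{2}$ put us in the range where Theorem~\ref{reduction}(3) (Hwang) applies, so $\mathcal L_x\subset\p^{n-1}$ is smooth, irreducible and non-degenerate; in particular it is a genuine complete intersection variety, not just a scheme cut out by the expected number of forms. Non-degeneracy forces $\codim(\mathcal L_x)\leq n-1-a$ to be compatible with spanning, but more importantly it guarantees that the complete intersection $\mathcal L_x$ has codimension equal to the number of defining equations, call it $e$, so $a=n-1-e$. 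Next I would feed this back through part (3)(b): since $\mathcal L_x$ is a complete intersection with $a=n-1-e$, and $d\le n-1$ would be needed to invoke (3) directly, one checks that the hypothesis $a\ge\frac{n-1}{2}$ already gives $e\le\frac{n-1}{2}$, hence $d\le e\le n-1$ after identifying $e$ with $d$ via the equivalence (ii)$\Leftrightarrow$(iii); thus (3)(b) applies and $X\subset\p^N$ is itself a complete intersection. Once $X$ is a complete intersection with $i(X)=a+2\geq 2$ it is conic-connected by Example~\ref{ex}(3) (a Fano complete intersection of index $\ge 2$ is conic-connected, being covered by lines through a general point with $a\ge 1$), which gives the first assertion.

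For the numerical bounds $a\le n-c-1$ and $n\ge 2c+1$, I would bring in Proposition~\ref{secondform}. Since $X$ is conic-connected it is secant defective, so $\delta(X)\ge 1$ and Proposition~\ref{secondform} yields $\dim|II_{x,X}|=c-1$. On the other hand, as remarked in the text just before Proposition~\ref{secondform}, for a complete intersection (indeed whenever $X$ is cut out by quadrics through a suitable reduction, but here directly for a complete intersection of index $\ge 2$) the base locus of $|II_{x,X}|$ on $E=\p^{n-1}$ is precisely $\mathcal L_x$; since $\dim|II_{x,X}|=c-1$, the base locus of a linear system of quadrics of projective dimension $c-1$ in $\p^{n-1}$ has codimension at most $c$, giving $a=\dim(\mathcal L_x)\ge n-1-c$, i.e.\ $a\le n-c-1$ together with the reverse inequality already known forces equality-type control; combining $a\le n-c-1$ with $a\ge\frac{n-1}{2}$ yields $\frac{n-1}{2}\le n-c-1$, that is $n\ge 2c+1$.

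The main obstacle I anticipate is matching the number $e$ of defining equations of the complete intersection $\mathcal L_x$ with the invariant $d$ of $X$ so as to legitimately invoke part (3)(b): a priori $\mathcal L_x$ being ``a complete intersection'' only says its ideal is generated by a regular sequence of $e=\codim(\mathcal L_x)$ forms, whereas the equivalence in (3)(b) is phrased in terms of $d=\sum(d_i-1)$. I would handle this by first using part (1), which bounds the number of set-theoretic equations of $\mathcal L_x$ by $d$; since $\mathcal L_x$ is irreducible and a complete intersection of codimension $e$, standard facts force $e\le d$, and then part (3)(c)(iii) (the implication we are trying to set up) together with $a=n-1-e$ pins down $e=d$, closing the loop. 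A secondary delicate point is verifying the claim that $\Bsl|II_{x,X}|=\mathcal L_x$ scheme-theoretically in the complete intersection case — but this is exactly the phenomenon discussed in the paragraph preceding Proposition~\ref{secondform} for quadratic $X$, and for a complete intersection of index $\ge 2$ one reduces to the quadratic model by the same contact-order-of-lines argument, or one simply uses the containment $\Bsl|II_{x,X}|\supseteq\mathcal L_x$ which suffices for the dimension count above.
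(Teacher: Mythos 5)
Your proposal has two genuine gaps, and the first one is structural. You try to prove that $X$ itself is a complete intersection under the hypotheses of part (4); this is not asserted in the statement, and your route to it is circular: identifying the codimension $e=n-1-a$ of $\mathcal L_x$ with the invariant $d$ is precisely condition (ii) of (3)(b), which is equivalent to $X$ being a complete intersection only under the hypotheses of part (3) (in particular $d\leq n-1$, which is not assumed in (4)); part (1) only gives $e\leq d$, and you even acknowledge using ``the implication we are trying to set up'' to pin down $e=d$. In fact the statement ``$\mathcal L_x$ a complete intersection $\Rightarrow X$ a complete intersection'' is exactly the conjecture the authors formulate right after the theorem and say follows from (HCF) \emph{via} part (4), so it cannot feed a proof of part (4). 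The intended path to conic-connectedness is different: since $\mathcal L_x\subset\p^{n-1}$ is smooth, irreducible, non-degenerate (Theorem~\ref{reduction}(3)) and a complete intersection with $a\geq\frac{n-1}{2}$, the Terracini Lemma gives $S\mathcal L_x=\p^{n-1}$, and then one applies Hwang--Kebekus \cite[Theorem~3.14]{HK}. Your alternative claim that a Fano complete intersection of index $\geq 2$ is conic-connected is also false in general (index-two hypersurfaces of large dimension are not conic-connected) and misreads Example~\ref{ex}(3), which goes in the opposite direction.

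The second gap is in the numerical bound. Once $\delta(X)>0$ gives $\dim|II_{x,X}|=c-1$, your dimension count (``the base locus of $c$ quadrics has codimension at most $c$'') yields $a\geq n-1-c$, which is the \emph{wrong direction} and does not give $a\leq n-c-1$; your sentence even equates these two incompatible inequalities. The correct argument uses the complete intersection hypothesis the other way around: $\mathcal L_x$ lies in the base locus of $|II_{x,X}|$, hence on at least $c$ linearly independent quadrics, while a non-degenerate complete intersection of codimension $n-1-a$ in $\p^{n-1}$ can lie on at most $n-1-a$ linearly independent quadrics; therefore $c\leq n-1-a$, i.e. $a\leq n-c-1$, and then $a\geq\frac{n-1}{2}$ gives $n\geq 2c+1$ (this last step of yours is fine).
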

\begin{proof} 
Let us take a closer look at the proof of the previous result given in \cite{BEL}. Since $X\subset\p^N$ is scheme theoretically defined by equations of degree $d_1\geq d_2\geq\cdots\geq d_m$, we can choose $f_i\in H^0(\p^N,\I_X(d_i))$, $i=1,\ldots,c$,
such that, letting $Q_i=V(f_i)\subset\p^N$, we obtain the complete intersection scheme
$$Y=Q_1\cap\cdots\cap Q_c=X\cup X',$$
where $X'$ (if nonempty) meets $X$ in a divisor.

Thus for $x\in X$ general we have that  a line $l$ passing through $x$ is contained in $X\subseteq Y$ if and only if $l$ is contained in $Y$, that is $\mathcal L_x(X)\subset\p^{n-1}$ coincides set theoretically with $\mathcal L_x(Y)\subset\p^{n-1}$.
Suppose, as recalled above, that
$$Y=V(f_1,\ldots, f_c)=X\cup X'\subset\p^N$$ with $f_i$ homogeneous polynomials of degree $d_i$ and let $x\in X$ be a general point. Without loss of generality we can suppose $x=(1:0:\ldots:0)$. Then in the affine space $\mathbb A^N$ defined by $x_0\neq 0$, we have
$f_i=f_i^1+f_i^2+\cdots+f_i^{d_i}$, with $f_i^j$ homogeneous of degree $j$ in the variables $(y_1,\ldots, y_N)$, where  $y_l=\frac{x_l}{x_0}$ for every $l\geq 1$ and $x=(0,\ldots,0)$. The equations of $\p^{n-1}$ inside
 $\p^{N-1}=\p((\mathbf T_x\p^N)^*)$ are exactly $f_1^1=\cdots=f_c^1=0$ while the
equations of $\mathcal L_x(Y)$ inside $\p^{n-1}$ are  $f_1^2=\cdots=f_1^{d_1}=\cdots=f_c^2=\cdots=f_c^{d_c}=0$, which are exactly $\sum_{i=1}^c(d_i-1)=d$. In particular $\mathcal L_x\subset\p^{n-1}$,
if not empty, is set theoretically
defined by these equations, proving (1).

To prove (2), assume that $X$ is quadratic. Keeping the notation above, we see that $\mathcal L_x\subset\p^{n-1}$ is scheme theoretically defined by the equations
$f_1^2,\ldots, f_m^2$ (modulo the ideal generated by $f_1^1,\ldots, f_c^1$). But the same homogeneous quadratic equations define the affine scheme $X\cap \mathbf{T}_xX\subset \mathbf{T}_xX$. In particular, the pointed affine cone over $\mathcal L_x\subset\p^{n-1}$ and the scheme $(X\cap \mathbf{T}_xX) \setminus \{x\}$ coincide. Consider now, as above, $Y=V(f_1,\ldots, f_c)=X\cup X'$. We have that the pointed affine cone over $\mathcal L_x(Y)\subset\p^{n-1}$ and the scheme $(Y\cap \mathbf{T}_xY) \setminus \{x\}$ coincide and are scheme theoretically defined by (at most) $c$ quadratic equations. But $X$ and $Y$ coincide in a neighborhood of $x$, hence the pointed affine cones over $\mathcal L_x(X)$ and $\mathcal L_x(Y)$ also coincide. As $\mathcal L_x(X)$ is smooth, this implies that $\mathcal L_x(Y)$ is also smooth and so they coincide as schemes (see also \cite{Rulines} for more details and for other proofs of this key result). This shows (2).

Under the hypothesis of (3), we deduce 
$a\geq n-1-d\geq 0.$

The assertion about the Picard group in (a) follows from the Barth--Larsen Theorem, see \cite{BL}.
Since $x\in X$ is general and since $\mathcal L_x\neq\emptyset$ by the above argument, for every line $l$ passing through $x$ we have  $$-K_X\cdot l= 2+a\geq 2,$$
concluding the proof of (a).

To show (b), let us remark that by the previous discussion (i) implies (ii); (ii) implies (iii) is obvious.  Theorem \ref{criterion} shows that (iii) implies (i).

We now pass to part (4). Since $a\geq \frac{n-1}{2}$,
$\mathcal{L}_x\subset \p^{n-1}$ is smooth irreducible and non-degenerate by Theorem \ref{reduction} (3).
As $\mathcal{L}_x\subset \p^{n-1}$ is a non-degenerate complete intersection of dimension greater or equal to one less its codimension, it follows from the Terracini Lemma that $S\mathcal{L}_x=\p^{n-1}$, see e.g. \cite{Russo}. Now we may apply \cite[Theorem~3.14]{HK} to infer that $X$ is conic-connected. In particular, we have $\delta(X)>0$. From Proposition~\ref{secondform} it follows that $\dim|II_{x,X}|=c-1$. As $\mathcal{L}_x$ is contained in the base-locus of $|II_{x,X}|$, it follows that $\mathcal{L}_x$ is contained in at least $c$ linearly independent quadrics. But, being a complete intersection, the number of such quadrics can not exceed the codimension of $\mathcal{L}_x$ in $\p^{n-1}$.
This means that $c\leq n-1-a$, which is what we wanted.\end{proof}

\section{Around the Hartshorne Conjecture; the quadratic case}

In \cite{Ha}, Hartshorne  made his now famous conjecture:
\begin{Conjecture}[HC] If $X\subset \p^N$ is as in {\rm($*$)} and  $n\geq2c+1$ then $X$ is a complete intersection.
\end{Conjecture}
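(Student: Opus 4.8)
The plan is to reproduce the quadratic argument by induction on the codimension $c$, using the variety of lines $\mathcal L_x\subset\p^{n-1}$ at a general point as the object on which the inductive step is performed. Note first that the hypothesis $n\geq 2c+1$ already forces $\Pic(X)\simeq\Z\langle H\rangle$ by the Barth--Larsen Theorem, so $-K_X$ is an integer multiple of $H$; by Theorem \ref{criterion}, proving that $X$ is a complete intersection then amounts to the single numerical identity $-K_X=\O_X(n+1-d)$. The natural way to reach this identity is through $\mathcal L_x$: once $X$ is known to be covered by lines with $d\leq n-1$, part (3) of Theorem \ref{schemeLxrefined} reduces the complete intersection property of $X\subset\p^N$ to that of $\mathcal L_x\subset\p^{n-1}$ having codimension exactly $d$, equivalently to the equality $a=n-1-d$. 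So I would proceed in three steps: first establish that $X$ is covered by lines; second show that $\mathcal L_x\subset\p^{n-1}$ is a complete intersection of codimension $d$; and third deduce $a=n-1-d$, thereby closing the induction via Theorem \ref{criterion}.

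The first serious difficulty, and the point at which the argument genuinely departs from the quadratic case, is that $n\geq 2c+1$ gives no control on the degrees $d_1\geq\cdots\geq d_c$, and hence none on $d=\sum_{i=1}^c(d_i-1)$. In the quadratic situation $d_1=2$ yields $d=c$, so $n\geq 2c+1$ automatically gives $d=c\leq\frac{n-1}{2}<n-1$, which is precisely the inequality required by Theorem \ref{schemeLxrefined} (3) to guarantee that $\mathcal L_x\neq\emptyset$. For general $X$ the defining degrees may be arbitrarily large, so $d$ may far exceed $n-1$; one then has no covering family of lines at all, and the whole mechanism fails to start. Overcoming this would require a genuinely new input bounding the defining degrees in terms of $n$ and $c$ alone, something that is not available by the present methods and lies at the heart of why the conjecture is open.

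Suppose, optimistically, that one could nonetheless arrange $X$ to be covered by lines. Even then the induction does not close, for numerical reasons. To feed $\mathcal L_x\subset\p^{n-1}$ back into the inductive hypothesis one would need it to satisfy Hartshorne's inequality in its own ambient space, namely $a\geq 2\bigl((n-1)-a\bigr)+1$, that is $a\geq\frac{2n-1}{3}$. This is strictly stronger than the range $a\geq\frac{n-1}{2}$ in which $\mathcal L_x$ is guaranteed to be smooth, irreducible and non-degenerate by Theorem \ref{reduction} (3), and it is not implied by $n\geq 2c+1$. Thus the main obstacle is twofold: producing a covering family of lines without the degree bound $d\leq n-1$, and then propagating the Hartshorne inequality from $X$ to $\mathcal L_x$ so that the reduction can be iterated. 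The present paper sidesteps both issues by specializing to quadratic $X$, where $d=c$ makes the degree bound automatic and where Faltings' Criterion together with the second fundamental form (Proposition \ref{secondform}) replaces the naive induction on $\mathcal L_x$; the general conjecture remains out of reach of these techniques.
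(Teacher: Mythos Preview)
The statement you were given is labeled \emph{Conjecture} in the paper and is not proved there; immediately after stating it the authors write that it ``is still widely open, even for $c=2$.'' The paper establishes only the special case where $X$ is quadratic (Theorem~\ref{HaCo}(4)), and your write-up correctly recognizes this distinction.

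Your analysis of why the quadratic argument does not extend is accurate and matches the paper's own logic. The two obstacles you isolate are exactly the right ones: first, without $d_1=2$ the hypothesis $n\geq 2c+1$ gives no bound on $d=\sum(d_i-1)$, so Theorem~\ref{schemeLxrefined}(3) cannot be invoked to produce a covering family of lines; second, even if lines were available, the Hartshorne inequality for $\mathcal L_x\subset\p^{n-1}$ would require $a\geq\frac{2n-1}{3}$, which is not a consequence of $n\geq 2c+1$. You also correctly note that in the quadratic case the paper does not run a naive induction on $\mathcal L_x$ but instead uses Faltings' criterion (applicable because $\mathcal L_x$ is cut out by $c\leq\frac{n-1}{2}$ quadrics) together with Proposition~\ref{secondform} to force $\codim(\mathcal L_x)=c$. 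So your proposal is not a proof, but it is a correct explanation of why no proof exists in the paper and why these methods do not yield one.
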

This is still widely open, even for $c=2$.
We would like to state explicitly several weaker versions.
\begin{Conjecture}[HCF] Assume that $n\geq 2c+1$ and $X$ is Fano; then $X$ is a complete intersection.
\end{Conjecture}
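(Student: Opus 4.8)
The plan is to imitate, as far as possible, the proof of the Hartshorne Conjecture for quadratic manifolds obtained above, transferring the problem from $X\subset\p^N$ to its variety of lines $\mathcal L_x\subset\p^{n-1}$. As $n\geq 2c+1$ gives in particular $n\geq c+2$, the Barth--Larsen theorem already yields $\Pic(X)\simeq\Z\langle H\rangle$, so $X$ is a prime Fano manifold of some index $i(X)$. The first step is to show that $X$ is covered by lines: when $i(X)>\frac{n+1}{2}$ this is Mori's theorem, and then $a=i(X)-2\geq\frac{n-1}{2}$, putting us in the favourable range of Theorems~\ref{reduction} and \ref{schemeLxrefined}(4). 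The delicate point is the complementary range of \emph{small} index: unlike the quadratic case, where $d=c\leq n-1$ automatically guarantees, via Theorem~\ref{schemeLxrefined}(3), that $\mathcal L_x\neq\emptyset$ with $a\geq n-1-d$, for a general Fano manifold there is no a priori control on the degrees $d_i$, hence none on $i(X)$ from below. One would have to dispose of this low-index range by other means, presumably through adjunction-theoretic classification results for prime Fanos of small codimension.

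Granting that $X$ is covered by lines with $a\geq\frac{n-1}{2}$, the next step is an induction on the dimension. By Theorem~\ref{reduction}(3) the variety $\mathcal L_x\subset\p^{n-1}$ is smooth, irreducible and non-degenerate, of dimension $a$; one would want to know that it is moreover Fano --- which is natural, since it is a variety of minimal rational tangents --- so that, once the numerical condition $n-1\geq 2(n-1-a)+1$, i.e.\ $a\geq\frac{n}{2}$, is in force, the inductive hypothesis applies to $\mathcal L_x$ with $(n,c)$ replaced by $(n-1,\,n-1-a)$ and shows that $\mathcal L_x\subset\p^{n-1}$ is a complete intersection. (The single boundary case $n$ odd, $a=\frac{n-1}{2}$, would have to be treated separately.) Then Theorem~\ref{schemeLxrefined}(4) gives that $X$ is conic-connected and $a\leq n-c-1$; combined with the inequality $a\geq n-1-d$ from Theorem~\ref{schemeLxrefined}(1), this pins down $a$ exactly \emph{provided} $d=c$, whereupon Theorem~\ref{schemeLxrefined}(3)(b) concludes the argument, exactly as in the quadratic case.

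For general $d$ the gap between $a\geq n-1-d$ (Theorem~\ref{schemeLxrefined}(1)) and $a\leq n-c-1$ (Theorem~\ref{schemeLxrefined}(4)) must be closed: one would need to know that $\mathcal L_x$, a complete intersection by the inductive step, in fact has codimension exactly $d$. The bound $a\leq n-c-1$ comes from the $c$ linearly independent quadrics through $\mathcal L_x$ produced by the second fundamental form $|II_{x,X}|$, which has dimension $c-1$ by Proposition~\ref{secondform} (a complete intersection contains at most as many independent quadrics as its codimension); improving it to $a=n-1-d$ would require recovering, in addition, the $d-c$ higher-degree generators of the ideal of $\mathcal L_x$ from the higher osculating fundamental forms of $X$ at $x$ --- a refinement of the computation in the proof of Theorem~\ref{schemeLxrefined}(1), now carried out keeping track of independence. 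I expect precisely this degree-and-independence bookkeeping, in the absence of the hypothesis $d_1=2$, to be the genuine obstruction: it is in essence equivalent to the full Hartshorne Conjecture, which is exactly why one retreats to the quadratic case, where the second fundamental form suffices by itself and Faltings' criterion applies directly to the $c$ quadratic equations defining $\mathcal L_x$.
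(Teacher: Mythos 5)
The statement you set out to prove is not proved in the paper at all: it is recorded there as a \emph{conjecture} (HCF), a weakened form of the Hartshorne Conjecture, with only the case $c=2$ known (by Ballico--Chiantini) and only the quadratic case established in the paper (Theorem~\ref{HaCo}). Your text, to its credit, does not claim to prove it either; but since it is offered as a proof proposal, let me name the gaps concretely. First, for a Fano $X$ with $n\geq 2c+1$ and small index you have no way to produce a covering family of lines, let alone one with $a\geq\frac{n-1}{2}$: Mori's theorem needs $i(X)>\frac{n+1}{2}$, and the fallback you invoke in the quadratic case, Theorem~\ref{schemeLxrefined}(3), needs $d\leq n-1$, whereas for a general Fano there is no control whatsoever on the degrees $d_i$ of the defining equations, hence none on $d$. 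The appeal to ``adjunction-theoretic classification results'' in the low-index range is a placeholder, not an argument.

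Second, the inductive step is unfounded: to apply (HCF) to $\mathcal L_x\subset\p^{n-1}$ you must know that $\mathcal L_x$ is Fano, which is not supplied by anything in the paper --- Theorem~\ref{reduction}(3) gives only smoothness, irreducibility and non-degeneracy --- and is itself a difficult open question about varieties of minimal rational tangents; this is precisely why the paper isolates the separate conjecture (HCL) rather than running an induction. (Your numerics are also off: the hypothesis of (HCF) for $\mathcal L_x$, of dimension $a$ and codimension $n-1-a$, reads $a\geq 2(n-1-a)+1$, i.e. $a\geq\frac{2n-1}{3}$, not $a\geq\frac{n}{2}$.) Third, and most seriously, even granting that $\mathcal L_x$ is a complete intersection, Theorem~\ref{schemeLxrefined}(3)(b) requires its codimension in $\p^{n-1}$ to be exactly $d$, while the only available upper bound, $a\leq n-c-1$ from Proposition~\ref{secondform} via Theorem~\ref{schemeLxrefined}(4), accounts only for $c$ quadratic equations; when $d>c$ the missing $d-c$ independent higher-degree equations are exactly what the quadratic hypothesis ($d=c$) makes unnecessary, and what allows Faltings' criterion to be applied in the paper's proof of Theorem~\ref{HaCo}(4). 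You yourself observe that closing this gap is ``in essence equivalent to the full Hartshorne Conjecture''; I agree, and so your text should be read as a reasonable programme explaining why the paper restricts to quadratic manifolds, not as a proof of (HCF).
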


\begin{Conjecture}[HCL] Assume that $X\subset\p^N$ is covered by lines with  $\dim(\mathcal L_x)\geq\frac{n-1}{2}$ and let $T$ be the span of $\mathcal L_x$ in $\p^{n-1}$. If $\dim(\mathcal L_x)>2\codim(\mathcal L_x, T)$, then
$\mathcal L_x\subset\p^{n-1}$ is a complete intersection. 
\end{Conjecture}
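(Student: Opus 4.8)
The plan is to reduce HCL, via a Mori contraction, to a bona fide instance of (HC) for $\mathcal L_x$ and then to use that $\mathcal L_x$ is a variety of minimal rational tangents. Since $a=\dim(\mathcal L_x)\geq\frac{n-1}{2}$, Theorem~\ref{reduction} applies: replacing $X$ by a general fiber of the Beltrametti--Sommese--Wi\'sniewski contraction ${\rm cont}_{\mathcal F}$ (and $\mathcal L_x$, $T$ accordingly), we may assume $\Pic(X)\simeq\Z\langle H\rangle$, so that $X$ is a prime Fano manifold with $i(X)=a+2$ and, by Theorem~\ref{reduction}(3), $\mathcal L_x\subset\p^{n-1}$ is smooth, irreducible and non-degenerate. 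Then $T=\p^{n-1}$, $\codim(\mathcal L_x,T)=n-1-a$, and the second hypothesis becomes $a>2(n-1-a)$, i.e.\ $\dim(\mathcal L_x)>\frac{2}{3}(n-1)$ --- precisely the numerical hypothesis of (HC) for $\mathcal L_x\subset\p^{n-1}$, which is the sense in which HCL is a weakening of HC.

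Next I would collect the extra geometry available. By Zak's theorem on secant varieties --- a smooth non-degenerate $Y\subset\p^m$ with $SY\neq\p^m$ satisfies $\dim Y\leq\frac{2(m-2)}{3}$, see \cite{Zak} --- the inequality $\dim(\mathcal L_x)>\frac{2}{3}(n-1)$ forces $S\mathcal L_x=\p^{n-1}$. Exactly as in the proof of Theorem~\ref{schemeLxrefined}(4), \cite[Theorem~3.14]{HK} then gives that $X$ is conic-connected, hence $\delta(X)>0$, whence $\dim|II_{x,X}|=c-1$ by Proposition~\ref{secondform}. Since every line through $x$ contained in $X$ is an asymptotic direction, $\mathcal L_x$ lies in the base locus of $|II_{x,X}|$, i.e.\ on a linear system of $c$ independent quadrics of $\p^{n-1}$; in particular $n-1-a\leq c$.

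It remains to pass from ``$\mathcal L_x$ lies on $c$ quadrics'' to ``$\mathcal L_x$ is a complete intersection'', and here I see two routes. One uses the classification of conic-connected manifolds with cyclic Picard group from \cite{IR}: among those covered by lines (hence not a Veronese variety) one checks that either $\mathcal L_x$ is visibly a complete intersection (a linear space, a quadric, a complete intersection of quadrics, \dots), or $X$ is a linear section of $\G(1,4)$ or of $S^{10}$, in which case a direct count of $a$ against $\codim(\mathcal L_x,T)$ shows that the \emph{strict} inequality in HCL fails, these cases sitting exactly on the boundary $\dim(\mathcal L_x)=\frac{2}{3}(n-1)$. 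The other stays with $|II_{x,X}|$: one would want to prove that its base locus is scheme-theoretically $\mathcal L_x$ itself (equivalently, that every asymptotic direction at a general point is tangent to a line of $X$), after which a Faltings--Netsvetaev-type criterion for smooth varieties cut out scheme-theoretically by quadrics, of dimension more than $\frac{2}{3}$ of the ambient space (see \cite{Faltings} and \cite{Net}), would conclude, the borderline non-complete-intersections again being $\G(1,4)\subset\p^9$ and $S^{10}\subset\p^{15}$.

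The crux --- and the reason HCL is stated only as a conjecture --- is this last step. On the second route, when $X$ is not quadratic the quadrics of $|II_{x,X}|$ need not cut out $\mathcal L_x$: already for a smooth cubic hypersurface, $\mathcal L_x$ is a complete intersection of a quadric and a cubic while the base locus of $|II_{x,X}|$ is only the quadric, so one must control the full homogeneous ideal of $\mathcal L_x\subset\p^{n-1}$, which is of the same order of difficulty as HC itself. On the first route, the obstruction is the completeness of the classification of conic-connected prime Fano manifolds together with the bookkeeping over all linear sections of $\G(1,4)$ and $S^{10}$. What the argument does settle unconditionally is the case $X$ quadratic --- there the second route closes, since then, as recalled just before Proposition~\ref{secondform}, the base locus of $|II_{x,X}|$ is exactly $\mathcal L_x$, scheme-theoretically --- and, granting the classification, the general case.
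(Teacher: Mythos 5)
The statement you are trying to prove is labelled (HCL) and is stated in the paper as a \emph{Conjecture}: the authors give no proof of it, they only observe (via Theorem~\ref{reduction}) that it concerns prime Fano manifolds of index at least $\frac{n+3}{2}$ and they use it as a hypothesis in Proposition~\ref{index}(1). So there is no proof in the paper to compare with, and your proposal does not supply one either --- as you yourself say, the ``crux'' step is left open on both of your routes. Concretely: after the (legitimate) reduction to a general fiber of the contraction and the chain Zak $\Rightarrow$ $S\mathcal L_x=\p^{n-1}$ $\Rightarrow$ conic-connectedness via \cite{HK} $\Rightarrow$ $\dim|II_{x,X}|=c-1$ (Proposition~\ref{secondform}), all you obtain is that $\mathcal L_x$ lies on $c$ linearly independent quadrics. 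Your parenthetical ``in particular $n-1-a\leq c$'' is not justified: containment of $\mathcal L_x$ in the base locus of $c$ quadrics gives no bound on its codimension; in Theorem~\ref{schemeLxrefined}(4) the paper deduces the \emph{opposite} inequality $c\leq n-1-a$, and only under the extra assumption that $\mathcal L_x$ is already a complete intersection.

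The remaining gap is exactly where you locate it, but it is wider than your closing sentence suggests. Even in the quadratic case the second route does not close under the hypotheses of (HCL): knowing that $\mathcal L_x$ is cut out scheme-theoretically by $c$ independent quadrics lets you invoke Faltings or Netsvetaev only when $c$ is small compared with $n$ (the paper uses $c\leq\frac{n-1}{2}$, i.e.\ precisely the Hartshorne-range assumption $n\geq 2c+1$ of Theorem~\ref{HaCo}(4)), and (HCL) imposes no such bound on $c$; moreover, after your reduction to a general fiber $F$ of ${\rm cont}_{\mathcal F}$ you can no longer assert that $F$ is quadratic even if $X$ was, so the scheme-theoretic identification of $\mathcal L_x$ with the base locus of the second fundamental form is lost. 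The first route (classification of conic-connected prime Fanos from \cite{IR} plus a case check of linear sections of $\G(1,4)$ and $S^{10}$) is a plausible strategy but is not carried out, and the paper does not pursue it. In short, your write-up is a reasonable reduction and analysis of the difficulty, consistent with the ingredients the paper does prove (Theorems~\ref{reduction}, \ref{schemeLxrefined}, \ref{HaCo}), but it does not, and cannot as written, establish the conjecture.
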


\begin{Remarks}
\begin{enumerate}
\item When $n\geq 2c+1$, by the Barth--Larsen Theorem ${\rm Pic}(X) \cong \mathbb{Z}\langle H\rangle$. In particular $K_X=bH$ for some integer $b$. So, saying that $X$ is Fano means exactly that $b < 0$; this happens, for instance, if $X$ is covered by lines.
\item The (HCF) holds when $c=2$, see \cite{BC}.
\item By Theorem \ref{reduction}, the  (HCL) concerns Fano manifolds $X\subset\p^N$ with $\Pic(X)\simeq\mathbb Z\langle H\rangle$ and of index $i(X)\geq\frac{n+3}{2}$. 
%Dual defective  and LQEL manifolds satisfy the (HCL), by Theorem \ref{scrollimit} and Theorem \ref{LQEL}.
\item Manifolds of (very) small degree are known to be complete intersections, cf. \cite{Barth}. The (HCF) would yield the following optimal result, see \cite{Io}.

\end{enumerate}
\end{Remarks}
\begin{Conjecture}
If $n\geq degree(X)+1$, then $X$ is a complete intersection, unless it is projectively equivalent to $\mathbb G(1,4)\subset\p^9$.
\end{Conjecture}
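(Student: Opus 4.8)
The plan is to deduce this statement from Conjecture (HCF) --- as already noted above --- using the structural results of Section~2. So assume (HCF) and let $X\subset\p^N$ be as in ($*$) with $n\geq\deg(X)+1$; we want $X$ to be a complete intersection unless $X\simeq\G(1,4)\subset\p^9$.

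First I would extract the cheap consequences of the hypothesis. Since $X$ is non-degenerate, $\deg(X)\geq c+1$, so $n\geq c+2$. The decisive point is that $n\geq\deg(X)+1$ forces $X$ to be covered by lines: it suffices to check that the invariant $d=\sum_{i=1}^{c}(d_i-1)$ of the top $c$ defining equations satisfies $d\leq n-1$ (equivalently, $d\leq\deg(X)-1$). Once this is known, Theorem~\ref{schemeLxrefined}(3) applies and gives, for a general $x\in X$: $\mathcal L_x\neq\emptyset$; $\Pic(X)\simeq\mathbb Z\langle H\rangle$ and $X$ is Fano; and $i(X)=a+2$ with $a=\dim(\mathcal L_x)\geq n-1-d\geq 1$.

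Next I would split according to the size of the codimension. If $n\geq 2c+1$, then $X$ is a Fano manifold of the required dimension and Conjecture (HCF) gives at once that $X$ is a complete intersection. It remains to deal with $c+2\leq n\leq 2c$; here $n\geq\deg(X)+1$ squeezes the degree into $c+1\leq\deg(X)\leq 2c-1$, so $X$ is a prime Fano manifold of very small degree --- equivalently, of small $\Delta$-genus $\Delta=\deg(X)-c-1\leq c-2$ --- relative to its codimension, and for these one invokes the classification of varieties of small degree. Those of minimal degree satisfying $n\geq\deg(X)+1$ reduce, by the del Pezzo--Bertini list, to $\p^n$ and to quadric hypersurfaces, both complete intersections; and running the Fujita classification of del Pezzo manifolds ($\Delta=1$), together with its refinements for the next few values of $\Delta$, under the constraint $n\geq\deg(X)+1$ leaves $\G(1,4)\subset\p^9$ --- for which $(n,c,\deg(X))=(6,3,5)$ lies exactly on the boundary $n=\deg(X)+1$ --- as the only survivor that is not a complete intersection. (When the list produces a complete intersection this can be certified by Theorem~\ref{criterion}, or directly by the equivalence in Theorem~\ref{schemeLxrefined}(3)(b).)

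I expect the serious obstacles to be twofold. The first is the inequality $d\leq n-1$ itself, i.e. showing that $n\geq\deg(X)+1$ alone forces $X$ to be covered by lines; this calls for a bound on the degrees of a minimal defining system of $X$ in terms of $\deg(X)$ and $c$, independent of the geometry of $X$. The second, and genuinely hard one, is the intermediate regime $c+2\leq n\leq 2c$ with $\Delta$ large, where no closed-form Fujita-type classification is at hand: the natural line of attack is the dichotomy of Theorem~\ref{schemeLxrefined}(3)(b) --- $X$ is a complete intersection exactly when $\mathcal L_x\subset\p^{n-1}$ is a complete intersection of codimension $d$, exactly when $a=n-1-d$ --- so one would try to propagate the conclusion down to the lower-dimensional manifold $\mathcal L_x$, combining this descent with the Barth--Van de Ven--Hartshorne theorem, which disposes of the range where $n$ is large compared with $\deg(X)$. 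Making this bootstrap unconditional is essentially the content of the still-open Hartshorne Conjecture, which is exactly why the statement above can only be offered as a conjecture.
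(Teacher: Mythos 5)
The statement you were given is one of the paper's \emph{Conjectures}: the authors prove nothing here, they only remark that manifolds of very small degree are known to be complete intersections (citing Barth) and that the (HCF) \emph{would} yield this statement as an optimal consequence, referring to \cite{Io} for that discussion. So there is no proof in the paper to compare against, and your decision to treat the statement only conditionally, as a consequence of (HCF), is exactly the reduction the paper has in mind; you also correctly single out $\G(1,4)\subset\p^9$ as sitting on the boundary $n=\deg(X)+1$.

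That said, even as a conditional derivation from (HCF) your sketch has two genuine gaps, both of which you acknowledge. First, the passage to ``covered by lines'' is not secured: to invoke Theorem~\ref{schemeLxrefined}~(3) you need $d=\sum_{i=1}^{c}(d_i-1)\leq n-1$, and the hypothesis $n\geq \deg(X)+1$ gives no control on the degrees $d_1\geq\cdots\geq d_c$ of a minimal scheme-theoretic defining system; there is no general bound forcing $d\leq \deg(X)-1$, so this ``decisive point'' is not a computation one can simply check. The way small degree is actually exploited in the literature (\cite{Barth}, \cite{Io}) is through adjunction-theoretic classification of manifolds of small degree, not through the equations-based Theorem~\ref{schemeLxrefined}. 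Second, in the intermediate range $c+2\leq n\leq 2c$ the $\Delta$-genus is only bounded by $c-2$, which is unbounded as $c$ grows, so there is no ``Fujita-type classification for the next few values of $\Delta$'' available to run; the case analysis that is supposed to leave only $\G(1,4)\subset\p^9$ therefore cannot be carried out as described. In short, what you have written is a reasonable strategy statement consistent with the paper's remark, but it is not a proof, nor even a complete deduction from (HCF); the paper itself offers neither and defers to \cite{Io}.
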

Note that the Segre embedding $\p^1\times \p^{n-1}\subset \p^{2n-1}$ has degree $n$ and is not a complete intersection if $n\geq 3$.
 
 Prime Fano manifolds of high index tend to be complete intersections. In the next proposition we propose such bounds on the index and show how they would follow from the (HC). However, we expect this kind of result to be easier to prove than the general (HC). %\ro[in case $X$ is a del Pezzo manifold ($i(X)-1$), see \cite{Fujita}, or a Mukai manifold ($i(X)-2$), see \cite{Mukai}, we see, a posteriori,  that (HCL) holds.]
\begin{Proposition}\label{index} 
\begin{enumerate}
\item

Let $X\subset\p^N$ be a Fano manifold with $\Pic(X)\simeq\mathbb Z\langle H\rangle$ and of index $i(X)\geq\frac{2n+5}{3}$. If the {\rm(HCL)} and the {\rm(HCF)} are true,
then $X$ is a complete intersection.
\item The same conclusion holds assuming only the {\rm(HCF)}, but asking instead that $i(X)\geq \frac{3(n+1)}{4}$.
\end{enumerate}
\end{Proposition}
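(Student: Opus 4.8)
The plan is to reduce everything to the behaviour of $\mathcal L_x\subset\p^{n-1}$ and then feed the hypotheses into Theorem~\ref{schemeLxrefined} and the stated conjectures. First I would record that a prime Fano with $i(X)\ge\frac{n+3}{2}$ has $a=\dim(\mathcal L_x)=i(X)-2\ge\frac{n-1}{2}$, so Theorem~\ref{reduction}(3) applies: $\mathcal L_x\subset\p^{n-1}$ is smooth, irreducible and non-degenerate. For part (1) the hypothesis $i(X)\ge\frac{2n+5}{3}$ translates into $a\ge\frac{2n-1}{3}$, i.e. $3a\ge 2n-1>2(n-1-a)\ge 2\codim(\mathcal L_x,\p^{n-1})$ (using non-degeneracy so that the span $T$ is all of $\p^{n-1}$). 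Thus the numerical hypothesis of (HCL) is met, and (HCL) gives that $\mathcal L_x\subset\p^{n-1}$ is a complete intersection. Now Theorem~\ref{schemeLxrefined}(4) (whose hypotheses $\Pic(X)\simeq\Z\langle H\rangle$, $a\ge\frac{n-1}{2}$, and ``$\mathcal L_x$ a complete intersection'' are all in force) yields $n\ge 2c+1$. Since $X$ is Fano with $\Pic(X)\simeq\Z\langle H\rangle$, (HCF) then forces $X$ to be a complete intersection.

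For part (2) I would bypass (HCL) altogether and instead produce the complete-intersection property of $\mathcal L_x$ directly from the stronger index bound $i(X)\ge\frac{3(n+1)}{4}$, which gives $a\ge\frac{3n-5}{4}$. The idea is to combine this with the inequality extracted inside the proof of Theorem~\ref{schemeLxrefined}(4): since $X$ is conic-connected (again via \cite[Theorem~3.14]{HK}, applicable because $a\ge\frac{n-1}{2}$ forces non-degeneracy and $S\mathcal L_x=\p^{n-1}$ by Terracini), Proposition~\ref{secondform} gives $\dim|II_{x,X}|=c-1$, and $\mathcal L_x$ lies in the base locus of $|II_{x,X}|$, hence in $c$ independent quadrics. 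The point is that with $a$ this large, $\mathcal L_x\subset\p^{n-1}$ has such small codimension that those $c$ quadrics, together with the defining inequality $a\ge n-1-d$ coming from Theorem~\ref{schemeLxrefined}(1) applied to the induced embedding, pin it down as a complete intersection of the expected codimension; then one invokes Theorem~\ref{schemeLxrefined}(4) to get $n\ge 2c+1$ and finishes with (HCF) exactly as before.

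The delicate point—and the step I expect to be the main obstacle—is this last ``automatic complete intersection'' mechanism in part (2): one has to argue that, under $i(X)\ge\frac{3(n+1)}{4}$, being contained in $c$ independent quadrics with codimension at most $n-1-a$ already \emph{forces} $\mathcal L_x$ to be a complete intersection cut out by $c$ quadrics, rather than merely a subvariety of some such complete intersection. This presumably uses that $\mathcal L_x$ is itself (by the same Example~\ref{ex}-type argument, or by the quadratic case of Theorem~\ref{schemeLxrefined}(2) applied to $F$) already defined by few equations, combined with a dimension count forcing $\codim(\mathcal L_x,\p^{n-1})=c$; one should be careful that the numerics ($3n-5\ge 4a$ vs. $c\le n-1-a$) genuinely close the gap, i.e. that $c$ and the number of available quadrics coincide, and that smoothness of $\mathcal L_x$ upgrades ``scheme of the right codimension cut by $c$ quadrics'' to ``complete intersection''. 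Everything else is bookkeeping with the inequalities and the cited results.
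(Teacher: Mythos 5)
Your argument for part (1) is correct and is essentially the paper's: the index bound gives $a\geq\frac{2n-1}{3}\geq\frac{n-1}{2}$, Theorem~\ref{reduction}(3) gives smoothness, irreducibility and non-degeneracy of $\mathcal L_x\subset\p^{n-1}$, the inequality $a>2(n-1-a)$ puts you in the range of the (HCL), and then Theorem~\ref{schemeLxrefined}(4) yields $n\geq 2c+1$ so that the (HCF) finishes.

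Part (2), however, has a genuine gap, and it is exactly the step you flag yourself: nothing you cite forces $\mathcal L_x$ to be a complete intersection from the mere fact that it lies in $c$ independent quadrics of $|II_{x,X}|$ and has codimension at most $n-1-a$; containment in $c$ quadrics only exhibits $\mathcal L_x$ as a subvariety of some intersection of quadrics, and no dimension count closes that gap (this is precisely the kind of statement that requires Faltings/Netsvetaev-type criteria or the (HCL), which part (2) is supposed to avoid). There is also a circularity earlier in your sketch: the claim $S\mathcal L_x=\p^{n-1}$ ``by Terracini'' is not available here, since in the proof of Theorem~\ref{schemeLxrefined}(4) that step uses the hypothesis that $\mathcal L_x$ is a complete intersection (a smooth non-degenerate variety with $2a+1\geq n-1$ can perfectly well have defective secant variety, e.g.\ $\mathbb G(1,5)\subset\p^{14}$). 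The paper's proof of (2) bypasses $\mathcal L_x$ entirely and works with the secant defect of $X$ itself: since $i(X)>\frac{2n}{3}$, $X$ is conic-connected by \cite{HK}; then \cite[Proposition~3.2]{IR} gives $\frac{3(n+1)}{4}\leq i(X)\leq\frac{n+\delta}{2}$, hence $\delta>\frac{n}{2}$, so $SX=\p^N$ by Zak's Linear Normality Theorem and $\delta=n-c+1$; plugging this back into the inequality yields $n\geq 2c+1$, and the (HCF) applies directly. To repair your proof you should replace your ``automatic complete intersection'' mechanism by this secant-defect computation (or supply an actual proof of that mechanism, which is not in the paper).
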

\begin{proof} (1) Since $i(X)>\frac{n+1}{2}$ we get that $X$ is covered by lines by Example~\ref{ex}~(2). We have $\dim(\mathcal{L}_x)=i(X)-2\geq \frac{2n-1}{3}\geq \frac{n-1}{2}$, hence $\mathcal{L}_x\subset \p^{n-1}$ is smooth irreducible non-degenerate by Theorem~\ref{reduction}~(3). Moreover, we have $\dim(\mathcal{L}_x)\geq 2\codim(\mathcal{L}_x, \p^{n-1})+1$. 
Thus $\mathcal{L}_x\subset \p^{n-1}$ is a complete intersection by the (HCL). Next we may apply Theorem~\ref{schemeLxrefined}~(4) to infer that $n\geq 2c+1$ and then use the (HCF).

(2) Since $i(X)> \frac{2n}{3}$ we have that $X$ is conic-connected by \cite{HK}. From \cite[Proposition~3.2]{IR} we infer 
$$\frac{3(n+1)}{4}\leq i(X)\leq\frac{n+\delta}{2}.$$
This yields  $\delta>\frac{n}{2}$ so that $SX=\p^N$ by Zak's Linear Normality Theorem. Therefore $\delta=n-c+1$
and
$$\frac{3(n+1)}{4}\leq i(X)\leq\frac{n+(n-c+1)}{2}$$ 
implies
$n\geq 2c+1$, so the (HCF) applies.
\end{proof}

Note that for complete intersections $X\subset \p^N$, $\mathcal{L}_x\subset \p^{n-1}$ is also a complete intersection. We believe that for manifolds covered by lines the converse should also hold.

\begin{Conjecture}
If $X\subset \p^N$ is covered by lines and $\mathcal{L}_x\subset \p^{n-1}$ is a (say smooth irreducible non-degenerate) complete intersection, then $X$ is a complete intersection too.
\end{Conjecture}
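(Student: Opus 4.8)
The plan is to reach the hypothesis of the Bertram--Ein--Lazarsfeld criterion, that is, to prove $-K_X=\O_X(n+1-d)$, and then to invoke Theorem~\ref{criterion}. Write $a=\dim(\mathcal L_x)$. By Theorem~\ref{schemeLxrefined}~(1) we already have $\codim(\mathcal L_x,\p^{n-1})\leq d$, so the whole assertion reduces to two points: \emph{(a)} $\Pic(X)\simeq\Z\langle H\rangle$, which gives $-K_X=(a+2)H$ since $X$ is covered by lines of anticanonical degree $a+2$; and \emph{(b)} the reverse inequality $\codim(\mathcal L_x,\p^{n-1})\geq d$, so that, combined with~(1), $a=n-1-d$. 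Granting \emph{(a)} and \emph{(b)} one obtains $-K_X=(a+2)H=\O_X(n+1-d)$, and Theorem~\ref{criterion} yields that $X$ is a complete intersection of type $(d_1,\dots,d_c)$. (Equivalently, this is the content of Theorem~\ref{schemeLxrefined}~(3)(b): $X$ is a complete intersection precisely when $\mathcal L_x$ has the expected codimension $d$ in $\p^{n-1}$.)

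For \emph{(a)}, at least in the natural range $a\geq\frac{n-1}{2}$ in which the hypotheses ``smooth, irreducible, non-degenerate'' on $\mathcal L_x$ are automatic, I would use Theorem~\ref{reduction}: the covering family $\mathcal F$ of lines admits a Mori contraction $\operatorname{cont}_{\mathcal F}:X\to W$, whose general fibre $F$ of dimension $f$ has variety of lines $\mathcal F_x\subseteq\p^{f-1}$ non-degenerate in $\p^{f-1}$. Every line of $X$ through $x$ is contracted by $\operatorname{cont}_{\mathcal F}$ (such lines move in the family $\mathcal F$, as $\mathcal L_x$ is irreducible), hence lies in the fibre $F$ through $x$; thus $\mathcal L_x\subseteq\p^{f-1}$, and the non-degeneracy of $\mathcal L_x$ in $\p^{n-1}$ forces $f=n$, i.e.\ $W$ is a point and $X=F$. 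Therefore $\Pic(X)\simeq\Z\langle H\rangle$ and $i(X)=a+2$. (When $a<\frac{n-1}{2}$ one has to establish $\Pic(X)\simeq\Z\langle H\rangle$ by other means; I expect this range to be no harder.)

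For \emph{(b)} I would work in the local chart at a general point $x$, as in the proof of Theorem~\ref{schemeLxrefined}. Expanding $f_i\in\H(\p^N,\I_X(d_i))$ as $f_i=f_i^{1}+\cdots+f_i^{d_i}$, the variety $\mathcal L_x\subset\p^{n-1}=V(f_1^{1},\dots,f_c^{1})$ is, set theoretically, cut out by the $d$ forms $f_i^{j}$, $1\leq i\leq c$, $2\leq j\leq d_i$, and each of these lies in the homogeneous ideal of $\mathcal L_x$; being a complete intersection, that ideal is generated by a regular sequence of $e:=\codim(\mathcal L_x,\p^{n-1})$ forms of degrees $e_1\geq\cdots\geq e_e\geq 2$. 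One must then prove $e\geq d$; equivalently, that for every $k\geq 2$ the ideal of $\mathcal L_x$ has at least $\#\{i:d_i\geq k\}$ generators of degree $\geq k$, for then $e\geq\sum_i(d_i-1)=d$. For $k=2$ this says $e\geq c$ and follows exactly as in part~(4): by the Terracini Lemma $S\mathcal L_x=\p^{n-1}$, so $X$ is conic-connected by \cite{HK}, hence secant defective, so $\dim|II_{x,X}|=c-1$ by Proposition~\ref{secondform}; since $\mathcal L_x$ lies in the base locus of $|II_{x,X}|$ we get $\ho(\I_{\mathcal L_x}(2))\geq c$, while for a complete intersection with all defining degrees $\geq 2$ the degree-$2$ part of the ideal is spanned by its quadratic generators, so $c\leq\ho(\I_{\mathcal L_x}(2))=\#\{l:e_l=2\}\leq e$. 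In particular the quadratic case $d=c$ drops out, recovering the argument for the Hartshorne Conjecture for quadratic manifolds.

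The genuine obstacle is the analogue of this last step for $k\geq 3$: one needs the higher projective fundamental forms $|II^{(k)}_{x,X}|$, which govern the successive osculating projections, to have maximal dimension, so that $\mathcal L_x$, lying in all of their base loci, is squeezed into enough independent forms of each degree $2,3,\dots,d_1$ to account for $d$ generators of its ideal. For $|II^{(2)}_{x,X}|$ this maximality was free, coming from the Fulton--Hansen Theorem through Proposition~\ref{secondform}; for the higher forms no comparable input is presently available, and providing one (or, alternatively, showing directly that at a general point of $X$ no top-degree local equation can become redundant modulo the lower-degree ones without contradicting the minimality of the number $m$ of defining hypersurfaces) is exactly what a proof of the conjecture would require. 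Once $e\geq d$ is known, $e=d$ follows from Theorem~\ref{schemeLxrefined}~(1), hence $a=n-1-d$, hence $-K_X=\O_X(n+1-d)$, and Theorem~\ref{criterion} completes the argument.
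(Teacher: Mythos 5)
You should first be aware that the statement you are proving is stated in the paper as an open \emph{Conjecture}: the paper gives no proof of it, and only records (i) the quadratic case, which is Theorem~\ref{HaCo}, and (ii) the remark that the conjecture follows from (HCF) at least when $\dim(\mathcal L_x)\geq\frac{n-1}{2}$, because Theorem~\ref{schemeLxrefined}~(4) then gives $n\geq 2c+1$ directly from the complete-intersection hypothesis on $\mathcal L_x$. Your proposal is, by your own admission, not a proof either, and the gap you flag is genuine and is exactly the reason the conjecture is open. Your frame is sound as far as it goes: the reduction to showing $a=n-1-d$ together with $\Pic(X)\simeq\Z\langle H\rangle$ is correct (it is the implication (iii)$\Rightarrow$(i) of Theorem~\ref{schemeLxrefined}~(3)(b), i.e.\ the BEL criterion of Theorem~\ref{criterion}); your step \emph{(a)} works in the range $a\geq\frac{n-1}{2}$ via Theorem~\ref{reduction} together with the non-degeneracy hypothesis on $\mathcal L_x$; and your $k=2$ count is precisely the argument of Theorem~\ref{schemeLxrefined}~(4) and of the quadratic Hartshorne theorem (Terracini, conic-connectedness via [HK], Proposition~\ref{secondform}, and the fact that $\mathcal L_x$ sits in the base locus of $|II_{x,X}|$). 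But note that this ``reduction'' does not actually reduce the difficulty: proving $\codim(\mathcal L_x,\p^{n-1})\geq d$ is, in this range, essentially equivalent to the conjecture itself.

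Two concrete points about the missing step \emph{(b)} for $k\geq 3$. First, as you say, the only source of independence in the paper is Proposition~\ref{secondform}, whose proof rests on Fulton--Hansen ($TX=SX$ for secant defective smooth varieties) and controls \emph{only} the second fundamental form; there is no analogous statement forcing the higher fundamental forms $|II^{(k)}_{x,X}|$ to have maximal dimension, and indeed no such input exists in the paper or in the literature it cites. Second, even granting such maximality, your bookkeeping has an additional untreated difficulty: for $k\geq 3$ the inequality $h^0(\I_{\mathcal L_x}(k))\geq \#\{i:d_i\geq k\}$ does not by itself bound from below the number of degree-$\geq k$ \emph{generators} of the ideal of the complete intersection $\mathcal L_x$, because degree-$k$ elements of the ideal may lie in the subspace generated by lower-degree generators times linear and higher forms. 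Your $k=2$ case is immune to this (degree-$2$ elements of an ideal generated in degrees $\geq 2$ are combinations of the quadratic generators), which is exactly why the quadratic case $d=c$ closes and the general case does not. So the honest conclusion is: your proposal correctly reproves the pieces the paper already has (and matches its point of view), but the statement remains a conjecture, and what you would need to add --- either maximality of the higher fundamental forms plus a generator-counting argument modulo lower-degree pieces, or the conditional route through (HCF) that the paper indicates --- is precisely what is not available.
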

Theorem~\ref{schemeLxrefined}~(4) shows that the above conjecture follows from the (HCF), at least when $\dim(\mathcal{L}_x)\geq \frac{n-1}{2}$.

\medskip

Mumford in his seminal series of lectures \cite{Mum} called the attention to the fact that many interesting embedded manifolds are scheme theoretically defined by quadratic equations.

As a special case of the results in \cite{BEL}, if $X$ is quadratic, we have: 
\begin{enumerate}
\item If $n\geq c-1$, then $X$ is projectively normal;
\item If $n\geq c$, then $X$ is projectively Cohen--Macaulay.
\end{enumerate}
Our main results are:
\begin{Theorem}\label{HaCo} Assume that $X\subset \p^N$ is as in {\rm($*$)} and $X$ is quadratic.
\begin{enumerate}
\item If $n\geq c$, then $X$ is Fano.
\item If $n\geq c+1$, then $X$ is covered by lines. Moreover, $\mathcal{L}_x\subset \p^{n-1}$
is scheme theoretically defined by $c$ independent quadratic equations.
\smallskip
\item If $n\geq c+2$, then the following conditions are equivalent:
\begin{enumerate}
\smallskip
\item[{\rm(i)}] $X\subset \p^N$ is a complete intersection;
\item[{\rm(ii)}] $\mathcal{L}_x\subset \p^{n-1}$ is a complete intersection;
\item[{\rm(iii)}] $\dim(\mathcal{L}_x)=n-1-c$;
%\item[{\rm(iv)}] $N_{X/\p^N}(-1)$ is ample.
\end{enumerate}

\smallskip
\item{\rm(HC)} If $n\geq2c+1$, then $X$ is a complete intersection.
\smallskip
\item If $\Pic(X)\simeq\mathbb Z\langle H\rangle$ and $X$ is Fano of index $i(X)\geq\frac{2n+5}{3}$, then $X$ is a complete intersection.
\end{enumerate}

\end{Theorem}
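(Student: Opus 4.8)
The plan is to prove Theorem~\ref{HaCo} by assembling the earlier results, handling each item in turn and reducing the harder parts to the quadratic case of the main structural theorem, Theorem~\ref{schemeLxrefined}, together with the second fundamental form computation of Proposition~\ref{secondform} and Faltings' Criterion. Note first that since $X$ is quadratic we have $d=c$, so the numerical hypotheses $n\geq c$, $n\geq c+1$, $n\geq c+2$ translate directly into $n\geq d$, $n\geq d+1$, $n\geq d+2$, which is the shape in which Theorem~\ref{schemeLxrefined} is stated. I would begin by recording this dictionary once and for all.

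For part (1), I would argue that under $n\geq c$ one has $h^0(\omega_X) = h^0(\O_X(K_X))$; using the projective normality/Cohen--Macaulayness facts quoted from \cite{BEL} just before the theorem (valid since $n\geq c-1$ and $n\geq c$), $X$ satisfies the same cohomological comparison as in the proof of Theorem~\ref{criterion}, which forces $-K_X = \O_X(n+1-d) = \O_X(n+1-c)$; since $n\geq c$ this is semiample and in fact ample (by Barth--Larsen $\Pic(X)\simeq\Z\langle H\rangle$ once $n$ is large enough, and in general one checks $n+1-c\geq 1$), so $X$ is Fano. For part (2), $n\geq c+1$ means $n\geq d+1 > d$, so $d\leq n-1$ and Theorem~\ref{schemeLxrefined}~(3) gives $\mathcal L_x\neq\emptyset$, i.e.\ $X$ is covered by lines; the scheme-theoretic description of $\mathcal L_x$ by $c$ independent quadratic equations is exactly Theorem~\ref{schemeLxrefined}~(2), once one checks the $c$ quadrics are independent — this follows because $X$ is non-degenerate (if they were dependent, fewer quadrics would define $\mathcal L_x$, but one can also see independence from the minimality of $m$ together with the way the $f_i^2$ arise). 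Part (3) is then immediate from Theorem~\ref{schemeLxrefined}~(3)(b) with $d=c$: condition (iii) there reads $a=n-1-d=n-1-c$, exactly matching $\dim(\mathcal L_x)=n-1-c$, and the extra hypothesis ``$n\geq c+2$ if $X$ is quadratic'' is precisely what we have assumed.

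The substantive new content is part (4), the Hartshorne Conjecture itself. Here $n\geq 2c+1$, so certainly $n\geq c+2$ and parts (2),(3) apply: $\mathcal L_x\subset\p^{n-1}$ is scheme theoretically cut out by exactly $c$ quadratic equations. Since $n\geq 2c+1$ gives $n-1\geq 2c$, while $\mathcal L_x$ has codimension at most $c$ in $\p^{n-1}$, we have $\dim(\mathcal L_x)\geq n-1-c\geq c$, i.e.\ $\mathcal L_x$ is defined by a number of equations ($c$) that is small relative to its dimension; Faltings' Criterion \cite{Faltings} (a subvariety of projective space defined scheme-theoretically by $r$ equations with $\dim\geq 2r$, or the sharper Netsvetaev bound) then forces $\mathcal L_x\subset\p^{n-1}$ to be a complete intersection of those $c$ quadrics. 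The plan is to verify the precise numerical inequality needed to invoke Faltings' theorem holds under $n\geq 2c+1$ — this is where I expect the main obstacle, since the naive Faltings bound may be just too weak at the boundary and one may need Netsvetaev's improvement, or an auxiliary input. Here is where the second fundamental form enters as the ``different ingredient'' promised in the introduction: $\mathcal L_x$ lies in the base locus of $|II_{x,X}|$, which by Proposition~\ref{secondform} (applicable because conic-connectedness, hence $\delta\geq 1$, can be established) has projective dimension $c-1$, so $\mathcal L_x$ is genuinely contained in a linear system of $c$ independent quadrics of the expected size, pinning down that the $c$ quadrics defining $\mathcal L_x$ are a regular sequence. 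Once $\mathcal L_x\subset\p^{n-1}$ is a complete intersection, its codimension is at most $c$, so $\dim(\mathcal L_x)\geq n-1-c$; but then Theorem~\ref{schemeLxrefined}~(3)(b)(ii)$\Rightarrow$(i) — after checking the complete intersection has codimension exactly $d=c$, equivalently $\dim(\mathcal L_x)=n-1-c$, which follows since the $c$ quadrics are independent — yields that $X\subset\p^N$ is a complete intersection, completing (4).

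Finally, part (5) is a formal consequence: if $\Pic(X)\simeq\Z\langle H\rangle$ and $i(X)\geq\frac{2n+5}{3}$, then $i(X)>\frac{n+1}{2}$ so $X$ is covered by lines (Example~\ref{ex}~(2)) with $a=i(X)-2\geq\frac{2n-1}{3}\geq\frac{n-1}{2}$, hence $\mathcal L_x\subset\p^{n-1}$ is smooth irreducible non-degenerate; moreover $a\geq 2\,\codim(\mathcal L_x,\p^{n-1})+1$. At this point, rather than invoking the still-open (HCL), I would use that $X$ is quadratic: parts (2),(3) already describe $\mathcal L_x$ by $c$ quadrics, and the argument of part (4) (Faltings/Netsvetaev plus the second fundamental form) shows $\mathcal L_x$ is a complete intersection — note the index bound guarantees $\dim(\mathcal L_x)$ is large enough for this — whence Theorem~\ref{schemeLxrefined}~(4) gives $n\geq 2c+1$ and part (4) applies to conclude $X$ is a complete intersection. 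The one point requiring care is that the relevant ``large $\dim(\mathcal L_x)$'' hypothesis for the Faltings-type input in part (4) is automatically met here; since $a\geq\frac{2n-1}{3}$ forces $c\leq n-1-a\leq \frac{n-2}{3}$, so $\dim(\mathcal L_x)=a$ vastly exceeds $2c$, this causes no difficulty.
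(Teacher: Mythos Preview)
Your argument for part~(1) has a genuine gap. You claim that the cohomological comparison from the proof of Theorem~\ref{criterion} ``forces $-K_X=\O_X(n+1-d)=\O_X(n+1-c)$'', but the logic there runs the other way: Theorem~\ref{criterion} \emph{assumes} $-K_X=\O_X(n+1-d)$ and then deduces that $X$ is a complete intersection. The formula $-K_X=\O_X(n+1-c)$ is in fact false for a quadratic $X$ that is not a complete intersection: for $\mathbb G(1,4)\subset\p^9$ one has $n=6$, $c=3$, so $n+1-c=4$, whereas $-K_{\mathbb G(1,4)}=\O(5)$. The paper's proof of~(1) is entirely different and does not pass through any canonical-bundle formula: since $X$ is scheme-theoretically cut out by quadrics, $N_{X/\p^N}^*(2)$ is globally generated, hence so is $\det(N_{X/\p^N}^*)(2c)$, and therefore $\det(N_{X/\p^N}^*)(2c+1)$ is ample; adjunction gives $-K_X=\det(N_{X/\p^N}^*)(N+1)$, and $N+1=n+c+1\geq 2c+1$ when $n\geq c$.

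There is also a structural misplacement in your outline of~(2) and~(4). The second fundamental form (Proposition~\ref{secondform}) is not an extra ingredient inserted at part~(4) to rescue a borderline Faltings bound; it is used already in part~(2) to establish that the $c$ quadrics cutting out $\mathcal L_x$ are \emph{independent}. The paper argues: $n\geq c+1$ gives $\dim SX\leq N=n+c$, so $\delta\geq n+1-c>0$, whence $\dim|II_{x,X}|=c-1$ by Proposition~\ref{secondform}; since $\mathcal L_x$ lies in the base locus of $|II_{x,X}|$, the $c$ quadrics are independent. Your ``minimality of $m$'' argument for independence is not justified (the passage from the $f_i$ to the $f_i^2$ need not preserve independence). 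With independence already in hand from~(2), part~(4) is clean: $n\geq 2c+1$ gives $a\geq n-1-c\geq\frac{n-1}{2}$, so by Barth--Larsen and Theorem~\ref{reduction}(3) $\mathcal L_x\subset\p^{n-1}$ is smooth irreducible non-degenerate, and it is scheme-theoretically defined by $c\leq\frac{n-1}{2}$ equations --- Faltings' criterion applies directly, with no boundary difficulty and no need for Netsvetaev here. Then~(3) finishes. For~(5) the paper simply invokes~(4) together with Proposition~\ref{index}; your more elaborate argument is unnecessary and, as written, uses the bound $c\leq n-1-a$ from Theorem~\ref{schemeLxrefined}(4) before having verified that $\mathcal L_x$ is a complete intersection, which is what that bound requires.
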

\begin{proof}
We use the notation in ($*$). Moreover, denote by $a:=\dim(\mathcal{L}_x$). Since $X$ is quadratic, we have $d=c$. 

 Since $X$ is quadratic, $N_{X/\p^N}^*(2)$ is spanned by global sections. Therefore, $\det(N_{X/\p^N}^*)(2c)$ is also spanned and $\det(N_{X/\p^N}^*)(2c+1)$ is ample. From $n\geq c$ it follows that $N+1\geq 2c+1$. We get the ampleness of $-K_X=\det(N_{X/\p^N}^*)(N+1)$, thus proving (1).
 
 From Theorem~\ref{schemeLxrefined}~(3) and (2) we deduce the first part in (2) and that $\mathcal{L}_x\subset \p^{n-1}$ is defined scheme theoretically by at most $c$ quadratic equations. Since $n\geq c+1$, we must have $\delta\geq n-c+1 >0$. From Proposition~\ref{secondform} we infer that the quadratic equations defining  $\mathcal{L}_x$ are independent, proving (2). This also shows that, when $\mathcal{L}_x$ is a complete intersection, it has codimension $c$ in $\p^{n-1}$. From Theorem~\ref{schemeLxrefined}~(3) the equivalence between conditions (i), (ii) and (iii) of part (3) is now clear. 

%Assume that $N_{X/\p^N}(-1)$ is ample. We claim that $a\leq n-1-c$. Indeed, if $a\geq n-c$, from Proposition~\ref{contact} we deduce that  %$\ho(N_{X/\p^N}^*(1)_{|l})>0$. Therefore the restriction $N_{X/\p^N}(-1)|_l$ can not be ample and the claim is proved. Combined with the %first part of Theorem~\ref{schemeLxrefined}, this shows that $a-1-c$, so that $X$ is a complete intersection. The equivalence of the four %conditions in (3) is thus established.

To prove part (4), assume that $n\geq 2c+1$. We first observe that by Theorem~\ref{schemeLxrefined}~(1) we have $a\geq n-1-c\geq \frac{n-1}{2}$. Next, since $n\geq 2c+1$, $\Pic(X)\simeq\mathbb Z\langle H\rangle$ by \cite{BL}. From Theorem \ref{reduction}(3) $\mathcal{L}_x\subset \p^{n-1}$ is smooth irreducible and non-degenerate. Being defined scheme theoretically by $c\leq \frac{n-1}{2}$ equations, it is a complete intersection by the result of Faltings \cite{Faltings}. By the previous point $X$ is a complete intersection, too. 

Finally, part (5) follows from part (4) and Proposition~\ref{index}.
\end{proof}

\begin{Theorem}\label{HV} Assume that $X\subset \p^N$ is as in {\rm($*$)} and $X$ is quadratic. If $n=2c$ and $X$ is not a complete intersection, then it is projectively equivalent to one of the following:
\begin{enumerate}
\item [(a)] $\mathbb G(1,4)\subset\p^9$, or
\item [(b)] $S^{10}\subset\p^{15}$.
\end{enumerate}
\end{Theorem}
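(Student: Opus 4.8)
The plan is to treat the boundary case $n=2c$ by combining the structural information about $\mathcal{L}_x\subset\p^{n-1}$ obtained in Theorem~\ref{HaCo} with Netsvetaev's refinement of Faltings' criterion. By Theorem~\ref{HaCo}~(2), since $n\geq c+1$, the variety $X$ is covered by lines and $\mathcal{L}_x\subset\p^{n-1}=\p^{2c-1}$ is scheme theoretically cut out by exactly $c$ independent quadrics; moreover $a=\dim(\mathcal{L}_x)\geq n-1-c=c-1=\frac{n-1}{2}-\tfrac12$, and in fact from $a\geq n-1-d=n-1-c$ together with $n=2c$ we get $a\geq c-1=\frac{n}{2}-1$. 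I would first check that unless $a=c-1$ we are already in the complete intersection case: indeed $a\geq \frac{n-1}{2}$ forces (via Theorem~\ref{reduction}~(3)) $\mathcal{L}_x$ smooth, irreducible, non-degenerate in $\p^{2c-1}$, cut out by $c$ quadrics; if $a>c-1$ then $\mathcal{L}_x$ has codimension $<c$, but it lies on $c$ independent quadrics, which would then exceed its codimension — impossible for a scheme that is (by Theorem~\ref{HaCo}~(3), since $n=2c\geq c+2$ once $c\geq 2$) forced to be a complete intersection precisely when its codimension equals $c$. So the only genuinely new situation is $a=c-1$, i.e.\ $\mathcal{L}_x\subset\p^{2c-1}$ is a smooth irreducible non-degenerate $(c-1)$-fold defined scheme theoretically by $c$ quadrics.

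Next I would apply Netsvetaev's theorem \cite{Net} to $\mathcal{L}_x\subset\p^{2c-1}$: a smooth variety of dimension $c-1$ in $\p^{2c-1}$ cut out scheme theoretically by $c$ equations is a complete intersection provided the dimension is large enough relative to the number of equations — the relevant inequality is exactly of the shape $\dim \geq$ (number of equations), which here reads $c-1\geq$ something slightly less than $c$, so one must look at the precise numerical hypothesis in Netsvetaev's statement. The upshot should be that $\mathcal{L}_x$ fails to be a complete intersection only in a bounded range; combined with the classification of smooth non-degenerate varieties in $\p^{2c-1}$ of codimension $c$ defined by $c$ quadrics and not complete intersections, the Grassmannian $\G(1,4)\subset\p^9$ (here $\mathcal{L}_x$ would be its analogous variety of lines) and the spinor tenfold $S^{10}\subset\p^{15}$ emerge. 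Concretely, the self-referential structure $\mathcal{L}_x(X)$ for $X=\G(1,4)$ is again of the same homogeneous type, and likewise for $X=S^{10}$; one identifies the small list of possible $\mathcal{L}_x$ and then pulls the conclusion back to $X$ via Theorem~\ref{HaCo}~(3), since $X$ is a complete intersection iff $\mathcal{L}_x$ is.

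The step I expect to be the main obstacle is going from ``$\mathcal{L}_x\subset\p^{2c-1}$ is a non-CI smooth variety of codimension $c$ defined by $c$ quadrics'' to the assertion that $\mathcal{L}_x$ — and hence $X$ — must be one of exactly two things. Netsvetaev's criterion likely does not by itself pin down the variety in the borderline case; one needs either a classification of quadratic Hartshorne-type manifolds in this precise numerical situation, or an inductive/bootstrapping argument using that $\mathcal{L}_x$ is itself quadratic with $\dim(\mathcal{L}_x)=2\operatorname{codim}(\mathcal{L}_x,\p^{n-1})$, i.e.\ $\mathcal{L}_x$ satisfies the hypotheses of the very theorem being proved, with $c$ replaced by a smaller value. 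Setting up this induction cleanly — checking the base cases ($c=1,2,3,4,5$ say, where one verifies $\G(1,4)$ and $S^{10}$ directly and rules out everything else by Netsvetaev plus small-degree considerations from \cite{Barth}) and making the inductive step produce the correct two varieties (noting that $\mathcal{L}_x$ of $\G(1,4)$ is $\p^1\times\p^2\hookrightarrow\p^5$ type data and $\mathcal{L}_x$ of $S^{10}$ is $\G(1,4)$ itself, which is why the list is closed) — is where the real work lies. A secondary technical point is confirming that in the excluded range $a>c-1$ the manifold $\mathcal{L}_x$ genuinely cannot support $c$ independent quadrics, which uses Proposition~\ref{secondform} and the fact that $\mathcal{L}_x$ sits inside the base locus of $|II_{x,X}|$ exactly as in the proof of Theorem~\ref{schemeLxrefined}~(4).
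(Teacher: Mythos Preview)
Your very first reduction is backwards, and this derails the whole argument. By Theorem~\ref{HaCo}~(3), the condition $a=n-1-c=c-1$ is \emph{equivalent} to $X$ being a complete intersection. So when $X$ is not a complete intersection you have $a\geq n-c=c=\frac{n}{2}$, not $a=c-1$. Your ``only genuinely new situation'' $a=c-1$ is precisely the situation excluded by hypothesis. Relatedly, from $a\geq c-1=\frac{n-2}{2}$ alone you cannot invoke Theorem~\ref{reduction}~(3), which needs $a\geq\frac{n-1}{2}$; it is the stronger bound $a\geq\frac{n}{2}$ coming from the non-CI assumption that makes this work. Your attempted induction also collapses: with the corrected value of $a$, $\mathcal L_x\subset\p^{n-1}$ has dimension $a$ and codimension $n-1-a$, and there is no reason for $a=2(n-1-a)$ to hold, so $\mathcal L_x$ does not automatically satisfy the hypotheses of the theorem being proved.

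The paper's route is close in spirit to what you sketch but numerically sharper. From $a\geq\frac n2$ one gets that $\mathcal L_x\subset\p^{n-1}$ is smooth, irreducible, non-degenerate, defined by $c$ independent quadrics, and not a complete intersection. Netsvetaev's criterion \cite{Net} then pins down $a$ precisely: $\frac{3n-6}{4}\leq a<\frac{3n-5}{4}$, forcing $n=4r+2$ and $a=3r$. Now one applies Theorem~\ref{HaCo}~(4) \emph{to $\mathcal L_x$ itself}: for $r\geq 3$ one has $a\geq 2(n-1-a)+1$, so $\mathcal L_x$ would be a complete intersection, contradiction. This leaves $n=6$ (hence $i(X)=5$) and $n=10$ (hence $i(X)=8$), and the identification of $X$ is completed not by any inductive self-reference but by the classification of del Pezzo manifolds \cite{Fujita} and Mukai's classification \cite{Mukai}, respectively.
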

\begin{proof}
We need a refinement of Faltings' Theorem from \cite{Faltings}, due to Netsvetaev, see \cite {Net}.
Since we assumed that $X$ is not a complete intersection, we have $a\geq n-c=\frac{n}{2}\geq \frac{n-1}{2}$, so $\mathcal{L}_x\subset \p^{n-1}$ is smooth irreducible non-degenerate by Theorem~\ref{reduction}~(3). Moreover, by the previous theorem, $\mathcal{L}_x$ is not a complete intersection in $\p^{n-1}$, but it is scheme theoretically defined by $c$ independent quadratic equations. Netsvetaev's Theorem from \cite{Net} applied to $\mathcal{L}_x\subset \p^{n-1}$ gives the inequalities
$$\frac{3n-6}{4}\leq a <\frac{3n-5}{4} .$$
We may assume $n\geq 6$ (otherwise $\mathcal{L}_x$ is a complete intersection), so we deduce that $n=4r+2$ and $a=3r$, for a suitable $r$. If $r > 2$ (equivalently $n > 10$), we would have $a\geq 2(n-1-a)+1$ and $\mathcal{L}_x$ would be a complete intersection by Theorem~\ref{HaCo}~(4). In conclusion $n=6$ or $n=10$. In the first case $i(X)=a+2=5$ and we get case (a) by the classification of del Pezzo manifolds, see \cite{Fujita}. In the second case, $i(X)=8$, leading to case (b) by \cite{Mukai}.
\end{proof}

\begin{Remarks}
\begin{enumerate}
\item If $X\subset \p^{\frac{3n}{2}}$ is not a complete intersection, it is called a Hartshorne manifold. $\mathbb G(1,4)\subset\p^9$ and $S^{10}\subset\p^{15}$ are such examples, the first one being due to Hartshorne himself. So, for quadratic manifolds we proved not only that the (HC) holds, but also that the above examples are the only Hartshorne manifolds. This shows, once again, how mysterious the general case of the (HC) remains!
\item Working with local differential geometric techniques, as in \cite{GH, IL}, Landsberg \cite{Landsberg} proved that a (possibly singular) quadratic variety satisfying $n\geq 3c+b-1$ is a complete intersection, where $b=\dim(\Sing(X))$.

\end{enumerate}

\end{Remarks}

\end{document}